\newcommand{\cross}{\cdot}
\newcommand{\ga}{{\frak a}}
\newcommand{\cO}{{\cal O}}
\newcommand{\Z}{{\bf Z}}
\newcommand{\F}{{\bf F}}
\newcommand{\Aut}{{\cal A}{\it ut}}
\newcommand{\Tr}{{\cal T}{\it r}}
\newcommand{\Ctf}{{\cal C}{\it tf}}
\newcommand{\Tw}{{\bf Tw}}
\newcommand{\Gal}{{\bf Gal}}
\newtheorem{theo}[subsection]{Theorem}
\newtheorem{lem}[subsection]{Lemma}
\newtheorem{prop}[subsection]{Proposition}
\newtheorem{exam}[subsection]{Example}
\begin{document}
\author{A. Davydov}
\title{Twisted automorphisms of group algebras}
\maketitle
\begin{center}
Department of Mathematics, Division of Information and Communication Sciences, Macquarie University, Sydney, NSW 2109,
Australia
\end{center}
\begin{center}
davydov@math.mq.edu.au
\end{center}
\begin{abstract}
We continue the study of twisted automorphisms of Hopf algebras started in \cite{da2}. In this paper we concentrate on
the group algebra case. We describe the group of twisted automorphisms of the group algebra of a group of order coprime
to 6. The description turns out to be very similar to the one for the universal enveloping algebra given in \cite{da2}.
\end{abstract}
\section{Introduction}
In \cite{da1}, using the language of Galois algebras, monoidal (auto-)equivalences of categories of representations of
finite groups were described in terms of some group-theoretic data. Composition of monoidal equivalences corresponding
to tensor product of Galois algebras turns out to have a quite complicated form in terms of that data. In particular, the
group structure on isomorphism classes of monoidal auto-equivalences (bi-Galois algebras) is not very easy to deal
with.

Here we describe this structure using a different presentation (as {\em twisted automorphisms}) for monoidal
(auto-)equivalences, which was developed in \cite{da2}. Since any Galois algebra over a finite group has a normal basis
(\cite{mo}) the results of \cite{da1,da2} imply that any bi-Galois algebra corresponds to a twisted automorphism. In
\cite{da2} general structure of the (Cat-)group of twisted automorphisms of a Hopf algebra was examined. As an example,
the case of the universal enveloping algebra of a Lie algebra (over the ring of formal power series) was treated. In
that situation any twisted automorphism is a bialgebra automorphism together with an invariant twist (the so-called
{\em separated} case). The gauge classes of invariant twists form an abelian group isomorphic to the invariant elements
of the exterior square of the Lie algebra. The group of gauge classes of twisted automorphisms is a crossed product of
the group of automorphisms of the Lie algebra and the group of invariant twists.

For the group algebra case the situation is more complicated yet resembles closely the case of a universal enveloping
algebra. For simplicity we restrict ourselves to groups of odd order. We still have the separation property but
separation into a composition of a group automorphism and an invariant twist is not unique: there are invariant twists
({\em symmetric} twists), which are gauge isomorphic to group automorphisms ({\em class-preserving} automorphisms).
However we can make it unique by using only anti-symmetric twists. The gauge classes of {\em anti-symmetric} twists form
an abelian group $Aut_{\Tw}^{anti}(k[G])$ with the operation closely related to the one on twists of a universal
enveloping algebra. If the order of the group is coprime to 6, the group of anti-symmetric twists is a subgroup of the
group of all invariant twists $Aut_{\Tw}^1(k[G])$. Moreover, the group of invariant twists is the direct product of the
group of anti-symmetric twists and the group of symmetric twists (the group of class-preserving outer automorphisms):
$$Aut_{\Tw}^1(k[G])\simeq Out_{cl}(G)\times Aut_{\Tw}^{anti}(k[G]).$$

We can summarise the main results of the paper in the form of a commutative diagram with exact rows and columns:
$$\xymatrix{ Out_{cl}(G) \ar[r] \ar[d]  & Out(G) \ar[r] \ar[d] &  Out(G)/ Out_{cl}(G) \ar[d]
\\ Aut_{\Tw}^1(k[G]) \ar[r] \ar[d] &  Aut_{\Tw}(k[G]) \ar[r] \ar[d]  & Out(G)/ Out_{cl}(G) \\
Aut_{\Tw}^{anti}(k[G]) \ar[r] & Aut_{\Tw}^{anti}(k[G]) }$$ Note that in general the subgroup $Out(G)$ (which
can be identified with the group of {\em symmetric} twisted homomorphisms) is not normal in $ Aut_{\Tw}(k[G])$. The
coset $Aut_{\Tw}(k[G])/Out(G)$ is naturally identified with the set of {\em triangular structures} on the Hopf algebra
$k[G]$ with trivial {\em Drinfeld invariant}. If the order of the group is coprime to 6, this set has a group structure
(the group of anti-symmetric twists).

Throughout the paper let $k$ be a ground field, which is supposed to be algebraically closed of characteristic zero.

\section*{Acknowledgment}
The paper was started during the author's visit to the Max-Planck Institut f\" ur Mathematik (Bonn). The author would
like to thank this institution for hospitality and excellent working conditions. The work on the paper was supported by
Australian Research Council grant DP00663514. Special thanks are due to R. Street for invaluable support during the work on the paper.

\section{Twists on group algebras}
Here we revise Movshev's classification of twists on the group algebra $k[G]$ of a finite group in terms of Galois
algebras \cite{mo}. An important ingredient of that classification is a group with a non-degenerate 2-cocycle. Recall
that a group 2-cocycle $\alpha\in Z^2 (S,k^\cross)$ is {\em non-degenerate} if, for any $s\in S$, the homomorphism
$$C_G(s)\to k^\cross,\quad x\mapsto \alpha(x,s)\alpha(s,x)^{-1}$$ is nontrivial. Note that a cocycle cohomologous to a
non-degenerate cocycle is also non-degenerate. There is a correspondence between groups with non-degenerate 2-cocycles
and the so-called groups of central type. A group is of {\em central type} if it has an irreducible representation of
dimension equal to the index of its centre (the largest possible). The quotient by the centre of a group of central type
is a group with non-degenerate 2-cocycle (thus the alternative name {\em central type factor group}). Conversely a
group with non-degenerate 2-cocycle has a central extension, which is a group of central type. Using the classification
of finite simple groups, Howlett and Isaacs proved that groups of central type are solvable (the conjecture of Iwahori
and Matsumoto) \cite{hi}.
\begin{theo}\label{gal}
The groupoid $\Gal(k[G])$ of Galois $G$-algebras is equivalent to the groupoid $\Ctf(G)$ of pairs $(S,\alpha)$, where
$S$ is a subgroup of $G$ and $\alpha\in Z^2(S,k^\cross)$ is a non-degenerate 2-cocycle. A morphism $(S,\alpha)\to
(T,\beta)$ is a pair $(g,c)$, where $g\in G$ and $c:S\to k^\cross$ such that $gSg^{-1} = T$ and $$\alpha(x,y)c(xy) =
\beta(gxg^{-1},gyg^{-1})c(gxg^{-1})c(gyg^{-1}),\quad x,y\in S.$$
\end{theo}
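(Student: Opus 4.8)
The plan is to exhibit a functor $F\colon\Ctf(G)\to\Gal(k[G])$, prove it is essentially surjective and fully faithful, and conclude that it is an equivalence of groupoids. On objects I would send a pair $(S,\alpha)$ to the induced algebra $A(S,\alpha)=\mathrm{Ind}_S^G k_\alpha[S]$, the $G$-algebra of $S$-equivariant functions $G\to k_\alpha[S]$, where $G$ acts by translation and $S$ acts on the twisted group algebra $k_\alpha[S]$ by conjugation. A direct computation shows that $A(S,\alpha)$ is separable, has $A(S,\alpha)^G=k$, and satisfies the Galois condition, so that it is indeed an object of $\Gal(k[G])$; this verification is routine and I would isolate it as a preliminary lemma.

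For essential surjectivity I would reverse the construction starting from an arbitrary Galois $G$-algebra $A$. Since $k$ has characteristic zero and $A^G=k$, the Galois condition makes $A$ separable and of dimension $|G|$, so $A\cong\prod_i M_{n_i}(k)$; the group $G$ permutes the blocks, and the centre $Z(A)$ is a commutative $G$-algebra $\mathrm{Fun}(X)$ for a finite $G$-set $X$. The inclusion $Z(A)^G\subseteq A^G=k$ gives $Z(A)^G=k$, hence $\mathrm{Fun}(X/G)=k$, forcing $X$ to be transitive, say $X\cong G/S$; this defines the subgroup $S$. Taking the point of $X$ fixed by $S$ and the corresponding block $B\cong M_n(k)$, the group $S$ acts on $B$ by algebra automorphisms, so by Skolem--Noether each $s\in S$ acts as conjugation by a unit $u_s\in B$, and $u_su_t=\alpha(s,t)u_{st}$ defines a class $\alpha\in Z^2(S,k^\cross)$, well defined up to the coboundary coming from the choice of the $u_s$.

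The crucial point is \emph{non-degeneracy}. Using the adjunction $(\mathrm{Ind}_S^G B)^G\cong B^S$ together with $A^G=k$, I would show $B^S=k$; but $B^S$ is exactly the centralizer in $B$ of the subalgebra generated by the $u_s$, so that subalgebra has trivial centralizer. As $k_\alpha[S]$ is semisimple (Maschke in characteristic zero), its image in $B$ is semisimple, and the double centralizer theorem in the simple algebra $B$ then forces this image to be all of $B$; comparing dimensions, $\dim k_\alpha[S]=|S|=[G:S]^{-1}\dim A=\dim B$, gives an isomorphism $k_\alpha[S]\cong B$. Since $B$ is simple, $k_\alpha[S]$ is simple, which is equivalent to $\alpha$ being non-degenerate. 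This both produces the object $(S,\alpha)\in\Ctf(G)$ and identifies $A\cong A(S,\alpha)$, so $F$ is essentially surjective. I expect this centralizer/double-centralizer step --- deducing non-degeneracy from the connectedness condition $A^G=k$ --- to be the main obstacle, since it is where the Galois hypothesis is genuinely used.

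Finally, for full faithfulness I would analyse a $G$-equivariant algebra isomorphism $\phi\colon A(S,\alpha)\to A(T,\beta)$. It must carry centre to centre, inducing an isomorphism of $G$-sets $G/T\to G/S$ and hence an element $g\in G$ with $gSg^{-1}=T$; restricting $\phi$ to the distinguished blocks gives an isomorphism $k_\alpha[S]\to k_\beta[T]$ intertwining the $S$- and $T$-actions through conjugation by $g$. Comparing the implementing units $\phi(u_s)$ with $u'_{gsg^{-1}}$ via Skolem--Noether produces scalars $c\colon S\to k^\cross$, and writing out that $\phi$ respects the cocycle relations yields exactly the identity $\alpha(x,y)c(xy)=\beta(gxg^{-1},gyg^{-1})c(gxg^{-1})c(gyg^{-1})$ of the theorem. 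Conversely, every such pair $(g,c)$ manifestly defines an isomorphism, and one checks that the assignment is bijective and compatible with composition. The remaining labour is bookkeeping: tracking the scalar ambiguities in the $u_s$ and the torsor of admissible $g$ modulo $S$, so that the correspondence on morphisms is well defined and functorial. Together with essential surjectivity this shows that $F$ is an equivalence of groupoids.
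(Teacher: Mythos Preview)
The paper does not actually prove this theorem: immediately after the statement it says ``For the proof of this slightly modified version of Movshev's result see \cite{da1}'' and then only records the explicit model $R(G,S,\alpha)$ of the Galois algebra attached to a pair $(S,\alpha)$. So there is no in-paper argument to compare your proposal against; the result is imported from Movshev \cite{mo} and the author's earlier paper \cite{da1}.

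Your sketch is, in fact, precisely the Movshev argument that those references carry out: read the centre of the Galois algebra as functions on a transitive $G$-set $G/S$ to extract $S$; apply Skolem--Noether on the distinguished simple block $B$ to get the projective action of $S$ and hence the cocycle $\alpha$; deduce non-degeneracy from $A^G=k$ via $B^S=k$ and the double centraliser theorem, matched with the dimension count $|S|=\dim B$ coming from $\dim A=|G|$; and handle morphisms by transporting centres and comparing implementing units. This is the standard route and your outline is correct.

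Two small points worth tightening. First, the equality $\dim A=|G|$ genuinely uses the Galois (bi-Galois/Hopf--Galois) isomorphism $A\otimes A\simeq A\otimes k[G]$, not merely separability together with $A^G=k$; you state it correctly but the justification you give (``separable\dots so $\dim A=|G|$'') runs the two together. Second, before invoking the adjunction $(\mathrm{Ind}_S^G B)^G\cong B^S$ you should explicitly identify $A$ with $\mathrm{Ind}_S^G B$ as a $G$-algebra (not just as an algebra with a $G$-action on the index set of blocks); this is straightforward once $X\cong G/S$ is in hand, but it is logically prior to the $B^S=k$ step. With those clarifications the argument is complete and matches what one finds in \cite{mo,da1}.
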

For the proof of this slightly modified version of Movshev's result see \cite{da1}. A Galois $G$-algebra corresponding
to the pair $(S,\alpha)$ can be realized explicitly as the algebra of functions: $$R(G,S,\alpha) = \{r:G\to
k[S,\alpha]:\quad r(sg) = s(r (g))\quad\forall s\in S, g\in G\}$$ with the $G$-action given by $(f\phi )(g) = \phi
(gf)$.

It was proved in \cite{mo} (see also \cite{da1}) that all Galois $G$ algebras possess normal bases thus (according to section 2.2. of
\cite{da2}) establishing the equivalence of the groupoids $\Aut_\Tw(k[G])\to \Aut_\Gal(k[G])$. Although a
quasi-inverse to this equivalence is very hard to construct in general, in some cases twists corresponding to Galois
algebras can be written explicitly. Note that, for an abelian $A$, the alternation map $$\alpha\mapsto Alt(\alpha),\quad
Alt(\alpha)(s,t) = \alpha(s,t)\alpha(t,s)^{-1}$$ identifies the cohomology group $H^2(A,k^\cross)$ with the group
$Hom(\Lambda^2 A,k^\cross)$ of {\em alternative} bi-multiplicative forms $$\beta:A\times A\to k^\cross,\quad \beta(s,s) = 1\ \forall
s\in A.$$ Moreover a bi-multiplicative form (not necessarily alternative) is always a 2-cocycle. It is not hard to see
that any alternative form on an abelian group is the alternation of some bi-multiplicative form. Thus any 2-cocycle on
an abelian $A$ is cohomologous to some bi-multiplicative form. It follows from the definition that a bi-multiplicative
form is non-degenerate as a 2-cocycle if and only if it is non-degenerate in the ordinary sense (establishes an
isomorphism $A\to \hat A$). For a non-degenerate bi-multiplicative form $\beta$ on abelian $A$ denote by $b$ the {\em
adjoint} bi-multiplicative form on $\hat A$. Note that $b$ is also non-degenerate. Define $$F_{(A,b)} =
\sum_{\psi,\chi\in\hat A}b(\psi,\chi)p_\psi\otimes p_\chi,$$ where $p_\chi = \frac{1}{|A|}\sum_{s\in S}\chi(s)^{-1}s$
is the minimal idempotent in $k[A]$ corresponding to the character $\chi$. In the next lemma we will verify
that $F_{(A,b)} $ (as an element of $k[G]^{\otimes 2}$) is the twist corresponding to the Galois $G$-algebra
$R(G,A,\beta)$.
\begin{lem}
Let $F=F_{(A,b)}$ be a twist corresponding to an abelian subgroup $A\subset G$ and a bi-multiplicative form
$\beta$. Then the function algebra $(k(G),*_F)$ with the $F$-twisted multiplication is isomorphic to the Galois
$G$-algebra $R(G,A,\beta)$.
\end{lem}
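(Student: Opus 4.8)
The plan is to make the $F$-twisted product on $k(G)$ explicit, observe that because $F$ lies in $k[A]\otimes k[A]$ the product is \emph{local} on the cosets of $A$, identify each local factor with the twisted group algebra $k[A,\beta]$ by Fourier analysis on $A$, and finally reassemble the factors $G$-equivariantly and compare with the coset description of $R(G,A,\beta)$. Throughout I let $k[G]$ act on $k(G)$ by left translations $L$, $(L_g\phi)(x)=\phi(g^{-1}x)$, which commute with the right-translation $G$-action $(f\phi)(g)=\phi(gf)$ that makes $R(G,A,\beta)$ a $G$-algebra.

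First I would record the twisted product. Writing $F^{-1}=\sum_{\psi,\chi}b(\psi,\chi)^{-1}p_\psi\otimes p_\chi$, the $F$-twisted multiplication of \cite{da2} reads
$$(\phi *_F\psi)(x)=\sum_{\psi',\chi}b(\psi',\chi)^{-1}\,(L_{p_{\psi'}}\phi)(x)\,(L_{p_\chi}\psi)(x),$$
and I would check that it is associative (this is the cocycle condition for $F$) and commutes with the right-translation action, so that $(k(G),*_F)$ is a $G$-algebra. Since $(L_{p_{\psi'}}\phi)(x)=\tfrac1{|A|}\sum_{a}\psi'(a)^{-1}\phi(a^{-1}x)$ only sees the values of $\phi$ on the left coset $Ax$, the product $\phi*_F\psi$ vanishes unless $\phi$ and $\psi$ are supported on the same coset. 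As $R_f$ commutes with $*_F$ and permutes the cosets, $(k(G),*_F)$ splits $G$-equivariantly as a direct sum, indexed by $A\backslash G$, of mutually isomorphic copies of $(k(A),*_F)$.

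Next I would treat the local (abelian) factor. Evaluating the formula on the characters $\eta\in\hat A$, regarded as a basis of $k(A)$, the identity $L_{p_\psi}\eta=\delta_{\psi,\eta^{-1}}\eta$ gives at once $\eta *_F\eta'=b(\eta,\eta')^{-1}\,\eta\eta'$, using $b(\eta^{-1},\eta'^{-1})=b(\eta,\eta')$. Thus $(k(A),*_F)$ is the twisted group algebra $k[\hat A,b^{-1}]$ of the dual group. Because $\beta$ is non-degenerate so is its adjoint $b$, whence $k[\hat A,b^{-1}]$ is a full matrix algebra; and by the definition of the adjoint form the isomorphism $\beta\colon A\xrightarrow{\sim}\hat A$ transports $b$ to $\beta$, producing an isomorphism $k[\hat A,b^{-1}]\cong k[A,\beta]$. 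This identifies each local factor with $k[A,\beta]=R(A,A,\beta)$, and shows that within the block over the coset $A$ the residual action of $A\subset G$ is by the automorphisms $\eta\mapsto\eta(a)\eta$ of $k[\hat A,b^{-1}]$, matching the $A$-equivariance built into $R(G,A,\beta)$.

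Finally I would assemble the pieces into a single map, of the form $\Phi(\phi)(g)=\sum_{a\in A}\lambda(a)\,\phi(a^{-1}g)\,u_a\in k[A,\beta]$, where $u_a$ is the standard basis of $k[A,\beta]$ and $\lambda$ is a normalizing function determined by $b$. The main obstacle is precisely this last bookkeeping: verifying that the single scalar normalization coming from the adjoint form $b$ simultaneously makes $\Phi$ multiplicative, correctly $A$-covariant in the sense $\Phi(\phi)(sg)=s(\Phi(\phi)(g))$, and $G$-equivariant for right translation. Once the abelian identity $\eta*_F\eta'=b(\eta,\eta')^{-1}\eta\eta'$ and the adjointness of $b$ and $\beta$ are in hand, the locality of $*_F$ reduces everything to a single coset, and the remaining verifications are routine provided one is careful with the $2$-cocycle and with the left-versus-right conventions.
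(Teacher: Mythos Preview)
Your approach is essentially the same as the paper's: reduce to the abelian subgroup via the coset (induction) decomposition, then identify $(k(A),*_F)$ with a twisted group algebra of $\hat A$ by evaluating the twisted product on characters, and finally transport along the duality between $b$ and $\beta$. The paper states the first step in one line as ``$(k(G),*_F)$ coincides with the induction algebra $ind^G_A(B)$ with $B=(k(A),*_F)$'' and then checks the local identification exactly as you do, via $p_\chi(l_\psi)=\delta_{\chi,\psi}l_\psi$ and $l_\chi*_Fl_\psi=b(\chi,\psi)l_{\chi\psi}$.

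Two small remarks. First, you twist with $F^{-1}$ whereas the paper twists with $F$; combined with your left-translation convention $(L_g\phi)(x)=\phi(g^{-1}x)$ this produces $b^{-1}$ where the paper gets $b$. Both are internally consistent, but your sentence ``the isomorphism $\beta\colon A\to\hat A$ transports $b$ to $\beta$'' is not quite right as stated: under $x\mapsto\chi_x$ with $\beta(x,\cdot)=\chi_x$, one has $b(\chi_x,\chi_y)=\beta(y,x)$, so $b$ pulls back to $\beta^{op}$, which is cohomologous to $\beta^{-1}$, and hence $k[\hat A,b^{-1}]\cong k[A,\beta]$ after all. It would be cleaner to state this correctly. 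Second, your ``final bookkeeping'' paragraph is exactly what the paper sweeps into the single word ``induction''; neither side spells it out, so you are not missing anything the paper provides, but if you want a complete proof you should actually verify the $A$-covariance $\Phi(\phi)(sg)=s\cdot\Phi(\phi)(g)$ rather than just announce it as routine.
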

\begin{proof}
First note that $(k(G),*_F)$ coincides with the induction algebra $ind^G_S(B)$ where $B$ is the function algebra
$(k(A),*_F)$ with the $F$-twisted multiplication. So all we need to check is that $(k(A),*_F)$ is isomorphic to the skew
group algebra $k[A,\beta]$. Note that $k[A,\beta]$ is (canonically) isomorphic to $k[\hat A,b]$. Now an isomorphism
$k[\hat A,b]\to (k(A),*_F)$ can be established explicitly by assigning $e_\chi\mapsto l_\chi=\sum_{a\in A}\chi(a)p_a$.
Indeed, since $p_\chi(l_\psi) = \delta_{\chi,\psi}l_\psi$ and $l_\chi i_\psi = l_{\chi\psi}$, the twisted product has the
form $$l_\chi*_F l_\psi = \sum{\chi',\psi'\in\hat A}b(\chi',\psi')p_{\chi'}(l_\chi)p_{\psi'}(l_\psi) =
b(\chi,\psi)l_\chi l_\psi = b(\chi,\psi)l_{\chi\psi}.$$
\end{proof}

Automorphisms of Galois $G$-algebras were studied in \cite{da1}. In view of theorem \ref{gal}, they are
automorphisms of objects of the groupoid $\Ctf(G)$ of central type factor subgroups of $G$.
\begin{theo}\label{age}
The group of $G$-automorphisms $Aut_G(R)$ of the Galois $G$-algebra $R=R(G,S,\alpha)$ fits into an exact sequence:
$$1 \to  \hat S \to Aut_G(R) \to St_{N_G(S)/S}(A).$$
\end{theo}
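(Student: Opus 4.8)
The plan is to use Theorem \ref{gal} to transport the problem into the groupoid $\Ctf(G)$, where the automorphism group of the object $(S,\alpha)$ consists of the pairs $(g,c)$ with $g\in N_G(S)$ (so that $gSg^{-1}=S$) and $c:S\to k^\cross$ satisfying
$$\alpha(x,y)c(xy)=\alpha(gxg^{-1},gyg^{-1})c(gxg^{-1})c(gyg^{-1}),\qquad x,y\in S,$$
two such pairs being identified when they induce the same automorphism of $R$. First I would define the map $Aut_G(R)\to N_G(S)/S$ by $(g,c)\mapsto gS$. Since the $G$-component of a composite of morphisms in $\Ctf(G)$ is the product of the $G$-components, this map descends to a group homomorphism $\pi$ to $N_G(S)/S$.

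Next I would show that the image lands in $St_{N_G(S)/S}(A)$, where $A=[\alpha]\in H^2(S,k^\cross)$. Writing $\alpha^g(x,y)=\alpha(gxg^{-1},gyg^{-1})$, the displayed relation says precisely that $\alpha^g$ and $\alpha$ are cohomologous, so the mere existence of a valid $c$ forces $[\alpha^g]=[\alpha]$. Conjugation by elements of $S$ is inner and therefore acts trivially on $H^2(S,k^\cross)$, so the conjugation action of $N_G(S)$ descends to an action of $N_G(S)/S$, and the condition $[\alpha^g]=[\alpha]$ is exactly the statement that $gS$ stabilizes $A$. Thus $\mathrm{im}(\pi)\subseteq St_{N_G(S)/S}(A)$; I would not try to prove surjectivity, which is why the sequence stops at $St_{N_G(S)/S}(A)$.

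It remains to identify the kernel, the classes of pairs with $g\in S$, with $\hat S=Hom(S,k^\cross)$. Here I would pass to the explicit model $R=R(G,S,\alpha)$, whose underlying algebra is a product of copies of the twisted group algebra $k[S,\alpha]$ indexed by a coset space of $S$ in $G$; by non-degeneracy of $\alpha$ this factor is a matrix algebra $End(V)$ for the unique irreducible projective representation $V$. A kernel element fixes the coset decomposition and, by $G$-equivariance, is determined by an $S$-equivariant automorphism of $k[S,\alpha]$, i.e. conjugation by an element $a$ with $u_s a u_s^{-1}=\mu(s)a$ for some $\mu\in\hat S$, where $\{u_s\}$ is the standard basis with $u_s u_t=\alpha(s,t)u_{st}$. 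The count is the crux: since $End(V)\cong V\otimes V^\ast$ as an $S$-representation and $\mu\otimes V\cong V$ for every $\mu\in\hat S$ (again by non-degeneracy), each character occurs with multiplicity exactly one, so these automorphisms are canonically parametrised by $\hat S$. On the pair side this is the statement that every class with $g\in S$ has a unique representative $(e,c)$ with $c\in\hat S$, the passage from $g=s$ to $g=e$ being absorbed by the character $Alt(\alpha)(-,s)$; in particular $(e,c)$ is an automorphism iff $c\in\hat S$, giving the injection $\hat S\to Aut_G(R)$ and exactness at $Aut_G(R)$.

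The main obstacle is precisely this last step: showing that the kernel is \emph{exactly} $\hat S$ and no larger. Naively the pairs $(s,c)$ with $s\in S$ form a set of size $|S|\cdot|\hat S|$, and the entire force of the non-degeneracy hypothesis on $\alpha$ is that these collapse — through the identification of inner conjugations with character shifts — onto a single copy of $\hat S$. The well-definedness of $\pi$ and the containment of its image in the stabilizer are, by contrast, routine consequences of chasing the cocycle relation.
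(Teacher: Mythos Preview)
The paper does not give its own proof of Theorem~\ref{age}: the result is quoted from \cite{da1} (the sentence immediately preceding the theorem reads ``Automorphisms of Galois $G$-algebras were studied in \cite{da1}''), and what follows the statement is only an explanation of notation and the corollary about when $|Aut_G(R)|=|G|$. There is therefore no in-paper argument to compare yours against.

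Your approach is nonetheless exactly the one the paper signals: the line ``In view of theorem \ref{gal}, they are automorphisms of objects of the groupoid $\Ctf(G)$'' is an explicit invitation to compute $Aut_G(R)$ as the automorphism group of $(S,\alpha)$ in $\Ctf(G)$, which is what you do. The containment of the image in $St_{N_G(S)/S}(A)$ via the cocycle relation is, as you say, routine.

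The step you flag as the main obstacle is genuinely the crux, and your instinct that something extra is needed is correct. If one reads Theorem~\ref{gal} literally as an equivalence of groupoids, then distinct pairs $(g,c)$ yield distinct automorphisms of $R$, and the fibre of $(g,c)\mapsto gS$ over the identity coset has order $|S|\cdot|\hat S|$, not $|\hat S|$. Your resolution---working in the explicit model $R(G,S,\alpha)$, identifying the kernel with $S$-equivariant algebra automorphisms of $k[S,\alpha]\cong\mathrm{End}(V)$, and using non-degeneracy of $\alpha$ to see that each eigencharacter $\mu\in\hat S$ occurs with multiplicity one in $V\otimes V^*$---is a correct way to pin this down, and is presumably the argument in \cite{da1}. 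Just be aware that this step genuinely goes beyond what Theorem~\ref{gal} as stated in this paper provides; the ``identification of pairs inducing the same automorphism'' that you invoke at the outset is not part of the groupoid description given here and is exactly the content you end up supplying by hand.
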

Here $\hat S = Hom(S,k^\cross)$ is the group of characters of $S$, $N_G(S)$ is the normaliser of $S$ in $G$, and $A\in
H^2(S,k^\cross)$ is the class of $\alpha$. In particular, the automorphism group $Aut_G(R)$ has the same order as $G$
only if $S$ is normal abelian and the class $A$ is $G$-invariant.

Now following \cite{da,da1} we will prove that (up to gauge equivalence) twists for twisted isomorphisms are
supported by normal abelian subgroups. Very similar results were proved in \cite{eg}.
\begin{theo}\label{absup}
Any twisted isomorphism $(f,F)$ of group algebras $k[G_1]\to k[G_2]$ is gauge isomorphic to a twisted isomorphism of
the form $(f',F_{(S,\beta)})$ for some normal abelian $S\subset G$ and a non-degenerate bi-multiplicative form $\beta$
with $G$-invariant alternation.
\end{theo}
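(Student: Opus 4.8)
The plan is to move the whole question into the language of Galois algebras through the groupoid equivalence $\Aut_\Tw(k[G])\to\Aut_\Gal(k[G])$ recorded above, and then to read off normality, commutativity and invariance from a single order estimate furnished by Theorems \ref{gal} and \ref{age}.

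First I would replace the twisted isomorphism $(f,F)$ by the bi-Galois algebra $R$ it corresponds to under that equivalence. Viewing $R$ as a Galois algebra over one of the two groups, say the codomain $G_2$ (the group denoted $G$ in the statement), Theorem \ref{gal} presents it as $R\cong R(G_2,S,\alpha)$ for a subgroup $S\subseteq G_2$ and a non-degenerate cocycle $\alpha\in Z^2(S,k^\cross)$, with class $A\in H^2(S,k^\cross)$. The point of $(f,F)$ being an \emph{isomorphism} rather than a bare twist is that $R$ carries a second, commuting Galois action, the one transported from $G_1$ along $f$, turning $R$ into a $(G_1,G_2)$-bi-Galois algebra. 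This second action is by $G_2$-equivariant algebra automorphisms and is free (freeness being a defining feature of a Galois action); it therefore embeds $G_1\hookrightarrow Aut_{G_2}(R)$. Since a bi-Galois algebra is Galois over both groups, $\dim R=|G_1|=|G_2|=:|G|$, and the embedding gives the lower bound $|Aut_{G_2}(R)|\ge|G|$.

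Next I would confront this with the exact sequence of Theorem \ref{age}, which yields $|Aut_{G_2}(R)|\le|\hat S|\cdot|St_{N_{G_2}(S)/S}(A)|$. Using $|\hat S|=|S/[S,S]|\le|S|$ together with $|St_{N_{G_2}(S)/S}(A)|\le|N_{G_2}(S)/S|\le|G|/|S|$ assembles the chain
$$|G|\le|Aut_{G_2}(R)|\le|\hat S|\cdot|St_{N_{G_2}(S)/S}(A)|\le|S|\cdot\frac{|G|}{|S|}=|G|.$$
The decisive step is that every inequality must then be an equality. Equality $|S/[S,S]|=|S|$ forces $S$ to be abelian; equality $|N_{G_2}(S)/S|=|G|/|S|$ forces $N_{G_2}(S)=G_2$, i.e. $S$ normal; and equality $|St_{N_{G_2}(S)/S}(A)|=|N_{G_2}(S)/S|$ means $A$ is fixed by all of $G_2/S$, i.e. the class $A$ is $G_2$-invariant.

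Finally, since $S$ is now abelian, the discussion preceding the first lemma shows that $\alpha$ is cohomologous to a non-degenerate bi-multiplicative form $\beta$, and that $G_2$-invariance of the class $A=[\beta]$ is equivalent to $G_2$-invariance of its alternation $Alt(\beta)$, because $Alt$ is a $G_2$-equivariant isomorphism $H^2(S,k^\cross)\to Hom(\Lambda^2 S,k^\cross)$. Passing from $\alpha$ to the cohomologous $\beta$ is precisely a gauge move in the sense of the morphisms $(g,c)$ described in Theorem \ref{gal} (take $g$ the identity), so up to gauge the Galois algebra becomes $R(G_2,S,\beta)$, whose twist is $F_{(S,\beta)}$ by the first lemma; this exhibits $(f,F)$ as gauge isomorphic to a twisted isomorphism $(f',F_{(S,\beta)})$ of the asserted form. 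The main obstacle I expect lies in the second step: establishing cleanly that the extra structure of a twisted \emph{isomorphism} realises the second group as a free subgroup of $Aut_{G_2}(R)$, since the entire conclusion rests on the resulting order inequality being tight.
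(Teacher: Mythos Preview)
Your proof is correct and follows essentially the same route as the paper: pass to the associated Galois algebra, use the bi-Galois structure to get $|Aut_{G_2}(R)|\ge|G|$, and then invoke Theorem~\ref{age} to force $S$ normal abelian with $G$-invariant class, before replacing $\alpha$ by a cohomologous bi-multiplicative form. The only difference is presentational: the paper compresses your inequality chain $|G|\le|\hat S|\cdot|St_{N_G(S)/S}(A)|\le|G|$ into a single appeal to the remark immediately following Theorem~\ref{age}, which already states the ``only if'' direction, so the content is identical.
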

\begin{proof}
For a twisted automorphism $(f,F)$ of $k[G]$ the $f$-twist $F$ must correspond to a Galois $G$-algebra $R$ with the
automorphism group isomorphic to $G$. Thus $R$ is isomorphic (as a Galois $G$-algebra) to $R(G,S,\alpha)$ for a normal
abelian $S$ with a non-degenerate 2-cocycle with $G$-invariant cohomology class. Finding a bilinear form $b$ with the
same class, we can replace $R(G,S,\alpha)$ with (the isomorphic Galois $G$-algebra) $R(G,S,b)$. Finally the Galois
$G$-algebra $R(G,S,b)$ corresponds to the twist $F_{(S,\beta)}$. Thus the twist $F$ is gauge isomorphic to the twist
$F_{(S,\beta)}$: $$F_{(S,\beta)}\Delta(a) = (a\otimes a)F$$ for some invertible $a\in k[G]$, which can be viewed as a
gauge transformation $a:(f,F)\to (f',F_{(S,\beta)})$, where $f'(x) = af(x)a^{-1}$.
\end{proof}

\section{Separation of twisted isomorphisms}

The fact that twists of twisted isomorphisms are supported by abelian normal subgroups allows us to prove separation
for twisted automorphisms of the group algebra of a group of odd order.
\begin{prop}\label{separ}
For a group $G$ of odd order any twisted automorphism $(f,F)$ of the group algebra $k[G]$ is gauge isomorphic to a
unique twisted automorphism of the form $(f',F_{(S,\beta)})$, where $f'$ is an automorphism of $k[G]$ induced by a
group automorphism of $G$, where $S$ is normal abelian, and where $\beta$ is a non-degenerate alternative $G$-invariant form.
\end{prop}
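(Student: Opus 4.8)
The plan is to start from Theorem \ref{absup}, which already handles the support of the twist, and then to perform two further normalisations: straightening the bi-multiplicative form into an alternative one, and straightening the algebra automorphism into a group automorphism. The odd-order hypothesis enters throughout via the observation that the values of all the forms in play are roots of unity of order dividing $|S|$, hence of odd order, so that squaring is a bijection on the relevant group of roots of unity and canonical square roots always exist. Concretely, I would first invoke Theorem \ref{absup} to replace $(f,F)$, up to gauge equivalence, by a twisted automorphism $(f_0,F_{(S,\beta_0)})$ with $S\subset G$ normal abelian and $\beta_0$ non-degenerate bi-multiplicative with $G$-invariant alternation $Alt(\beta_0)$. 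To make the form alternative, recall from the discussion preceding Theorem \ref{age} that two $2$-cocycles on $S$ are cohomologous exactly when they have the same alternation, and that an alternative form $\beta$ satisfies $Alt(\beta)=\beta^2$. I would therefore take $\beta$ to be the unique alternative square root of $Alt(\beta_0)$; since squaring is an automorphism of the odd-order group of values, such a $\beta$ exists, is uniquely determined, is again non-degenerate, and is $G$-invariant because the square root is canonical and so commutes with conjugation by $G$. As $\beta$ and $\beta_0$ share the same alternation they are cohomologous, so $R(G,S,\beta)\cong R(G,S,\beta_0)$ and the twists $F_{(S,\beta_0)}$, $F_{(S,\beta)}$ are gauge equivalent, producing $(f_1,F_{(S,\beta)})$ with $\beta$ non-degenerate, alternative and $G$-invariant.

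Next I would upgrade $f_1$ to a group automorphism. Since $\beta$ is $G$-invariant the twist $F_{(S,\beta)}$ commutes with every $\Delta(g)$, i.e. it is an invariant twist, and the defining compatibility $\Delta(f_1(x))=F_{(S,\beta)}(f_1\otimes f_1)\Delta(x)F_{(S,\beta)}^{-1}$ says precisely that each $f_1(g)$ is grouplike for the twisted coproduct $F_{(S,\beta)}\Delta(\cdot)F_{(S,\beta)}^{-1}$. The residual gauge freedom preserving the twist $F_{(S,\beta)}$ is, under the equivalence $\Aut_\Tw(k[G])\to\Aut_\Gal(k[G])$, identified with the automorphism group $Aut_G(R(G,S,\beta))$ of Theorem \ref{age}, which here (as $S$ is normal and the class $A$ is $G$-invariant) fits into $1\to\hat S\to Aut_G(R)\to St_{N_G(S)/S}(A)=G/S$ and has order $|G|$. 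I would use this freedom to transport the family of twisted grouplikes $\{f_1(g)\}$ onto genuine group elements: the obstruction to doing so is a class measuring the failure of $f_1$ to be honestly multiplicative, and the odd-order hypothesis kills this class by the same square-root/averaging mechanism. This yields a gauge transformation $a\in Aut_G(R)$ with $f'=af_1a^{-1}$ induced by a group automorphism of $G$, while the twist remains $F_{(S,\beta)}$. I expect this to be the main obstacle: it is the actual content of \emph{separation}, and it requires controlling how the algebra automorphism interacts with the coalgebra structure rather than merely manipulating cocycles.

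Finally, for uniqueness I would argue as follows. The isomorphism class of the Galois algebra attached to the twist is a gauge invariant, so any two special representatives have conjugate supports and cohomologous forms; normalising the support we may take the supports equal and the forms cohomologous. Two cohomologous \emph{alternative} forms $\beta_1,\beta_2$ have equal alternation, i.e. $\beta_1^2=\beta_2^2$, and the odd-order injectivity of squaring forces $\beta_1=\beta_2$; thus the alternative twist $F_{(S,\beta)}$ is uniquely determined. This is exactly where the anti-symmetric normalisation does its work, removing the ambiguity coming from the symmetric twists (class-preserving automorphisms) noted in the introduction. The remaining gauge transformations preserve $F_{(S,\beta)}$ and hence lie in $Aut_G(R)$; matching the two group automorphisms then pins down $f'$ up to the inner automorphisms realised by gauging with group elements of $G$, i.e. up to a well-defined class in $Out(G)$, which completes the uniqueness statement.
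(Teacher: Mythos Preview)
Your first step, invoking Theorem~\ref{absup} and then replacing $\beta_0$ by the unique alternative square root $\beta$ of $Alt(\beta_0)$, is exactly the paper's argument: for odd $|S|$ the alternation map is a bijection on $Hom(\Lambda^2 S,k^\cross)$, so there is a unique alternative $\beta$ cohomologous to $\beta_0$, and $G$-invariance of $Alt(\beta_0)$ transfers to $\beta$.

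Your second step, however, is much harder than it needs to be, and the gap you anticipate does not exist. You correctly note that $F = F_{(S,\beta)}$ is $G$-invariant, i.e.\ commutes with every $\Delta(g) = g\otimes g$ and hence with all of $\Delta(k[G])$. But then no further gauge transformation is required: since $f_1$ is an algebra \emph{automorphism}, $\Delta(f_1(x))$ ranges over $\Delta(k[G])$ as $x$ varies, so $F$ commutes with $\Delta(f_1(x))$ for every $x$. The compatibility $\Delta(f_1(x))F = F(f_1\otimes f_1)\Delta(x)$ therefore collapses to $\Delta(f_1(x)) = (f_1\otimes f_1)\Delta(x)$, which says that $f_1$ is already a coalgebra map, hence a bialgebra automorphism of $k[G]$, hence induced by a group automorphism of $G$. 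This is precisely what the paper means by ``separated'': once the twist is invariant, the algebra automorphism is automatically a bialgebra automorphism. There is no obstruction class to kill, and the gauge freedom coming from $Aut_G(R)$ plays no role in establishing existence. What you flag as ``the main obstacle'' is in fact a one-line consequence of invariance; your proposed obstruction-killing argument is both unnecessary and, as written, too vague to constitute a proof.

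Your uniqueness argument (equal alternations force equal alternative forms by injectivity of squaring) is sound and in fact more explicit than what the paper writes down.
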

\begin{proof}
For odd order $S$ the alternation map is bijective on $Hom(\Lambda^2S,k^\cross)$. Thus we can strengthen the statement
of the theorem \ref{absup} assuming that $\beta$ is a non-degenerate alternative $G$-invariant form. This assumption
makes the twist $F_{(S,\beta)}$ $G$-invariant. Hence the twisted automorphism $(f',F_{(S,\beta)})$ is separated and
$f'$ is a bialgebra automorphism of $k[G]$, which has to be induced by a group automorphism of $G$.
\end{proof}

Below we show that group algebras of groups of even order can have non-separated twisted isomorphisms.

Let $\beta:\hat A\times\hat A\to k^\cross$ be a non-degenerate bi-multiplicative form on the dual group of an abelian
normal subgroup $A\subset G$ with $G$-invariant alternation $$b(g(\chi)g(\xi)) = b(\chi,\xi),\quad b(\chi,\xi) =
\beta(\chi,\xi)\beta(\xi,\chi)^{-1}.$$ This means that for any $g\in G$ the form $\beta_g(\chi,\xi) =
\beta(g(\chi),g(\xi))\beta(\chi,\xi)^{-1}$ is symmetric. Define $$\tilde G = \{(g,c)| g\in G, c:\hat A\to k^\cross,\
\beta_g(\chi,\xi) = c(\chi)c(\xi)c(\chi\xi)^{-1},\ \forall \chi,\xi\in\hat A\}.$$ It is a group with the product
$$(g_1,c_1)(g_2,c_2) = (g_1g_2,(c_1)^{g_2}c_2),$$ where $c^g(\chi) = c(g^{-1}(\chi))$. Define a homomorphism
$f:k[\tilde G]\to k[G]$ by $$f(g,c) = g\sum_{\chi\in \hat A}c(\chi)p_\chi$$ and the twist $$F = \sum_{\chi,\xi\in \hat
A}\beta(\chi,\xi)p_\chi\otimes p_\xi,$$ where $ p_\chi = \frac{1}{|A|}\sum_{a\in A}\chi(a)^{-1}a$. By the definition
$$\Delta'(f(g,c))F = F(f(g,c)\otimes f(g,c))$$ so that $(f,F)$ is a twisted homomorphism. The subgroup $$K =
\{(a,c_a^{-1}),\ a \in A\}\subset \tilde G$$ is mapped into 1, which means that $(f,F)$ induces a twisted homomorphism
$(f,F):k[\overline G]\to k[G]$, where $\overline G = \tilde G/K$. Note that pairs $(a,1)$, where $1(\chi) = \chi(1)$,
form a normal subgroup in $\tilde G$($\overline G$) isomorphic to $A$. Moreover, the quotient group $\overline G/A$ is
isomorphic to the quotient group $Q = G/A$ and the $Q$-action on $A$, coming from $\overline G$, coincides with the one
coming from $G$. The class of the extension $A\to \overline G\to Q$ in $H^2(Q,A)$ was calculated in \cite{da1}. It is 
equal to the class of $A\to G\to Q$ shifted by the image of $b\in Hom(\lambda^2(\hat A),k^\cross) = H^2(\hat
A,k^\cross)$ under the map $$D:H^0(Q,H^2(\hat A,k^\cross))\to H^2(Q,H^1(\hat A,k^\cross)) = H^2(Q,A),$$ which is the
differential of the second term of the Hochschild-Serre spectral sequence associated with the split extension of $A$
by $Q$. The map $D$ is trivial in the absence of 2-torsion. Thus for odd order $G$ the twisted homomorphism $(f,F)$ is
a twisted automorphism and, in particular, is separable. In the case when $\overline G$ is not isomorphic to $G$ the
twisted homomorphism $(f,F)$ can not be separable. Of course non-triviality of $D(b)$ is not enough to guarantee that $G$ is not isomorphic to $\overline G$ as abstract groups, but in the
next example (taken from \cite{da1}, see also \cite{eg}) it can be checked directly.

\begin{exam}Affine symplectic and metaplectic groups.
\end{exam}
Let $A$ be an elementary abelian 2-group with a symplectic form $(\ ,\ ):A\otimes A\to \F_2$. Let $Q$ be the group of
automorphisms of $A$ preserving $(\ ,\ )$, i.e. the symplectic group $Sp(n,2)$, where $n$ is the rank of $A$. Let $G$
be the semi-direct product of $A$ and $Q$, i.e. the affine symplectic group $ASp(n,2)$. Let $m:A\otimes A\to \F_2$ be a
bi-linear form such that $m(x,y) - m(y,x) = (x,y)$. Define a non-degenerate bi-multiplicative form $\beta:A\otimes A\to
k^\cross$ by $$\beta(x,y) = (-1)^{m(x,y)}.$$ By the definition the alternation of $\beta$ is $G$-invariant. The group
$\overline G$ corresponding to the abelian normal subgroup $A$ with the form $\beta$ coincides with the quotient of the
group of pairs
\begin{multline*}
\{(g,q)| g\in Sp(n,2), q:A\to \F_2, \\
m(g(x),g(y)) - m(x,y) = q(x) = q(y) - q(x+y),\ \forall x,y\in
A\},
\end{multline*}
which is called the {\em metaplectic group} $Mp(n,2)$ \cite{we}. It is known that for $n>2$ the metaplectic
group $Mp(n,2)$ is not isomorphic to the affine symplectic group $ASp(n,2)$. Thus the twisted homomorphism
$$(f,F):k[ASp(n,2)]\to k[Mp(n,2)].$$ is not separable.

\section{Class-preserving automorphisms}\label{clpr}
Here we describe invariant twists, which stabilise the unital inclusion $k\to k[G]$ (as in the section 4.3 of \cite{da2}) and
link them with class-preserving automorphisms of $G$. Recall that an automorphism $f:G\to G$ is {\em class-preserving}
if $f$ preserves conjugacy classes of $G$: for any $g\in G$, $$f(g) = xgx^{-1},\ \mbox{for some}\ x\in G.$$ Clearly,
class-preserving automorphisms are closed under composition, forming a normal subgroup $Aut_{cl}(G)$ of the
automorphism group $Aut(G)$. Inner automorphisms are obviously class-preserving. Moreover, the homomorphism
\begin{equation}\label{crmcl}
Aut_{cl}(G) \stackrel{\partial}{\leftarrow} G
\end{equation}
sending $g\in G$ to the inner automorphism $(\ )^g$ defines a crossed module of groups $\Aut_{cl}(G)$. In particular,
$\pi_0(\Aut_{cl}(G)) = Out_{cl}(G)$ is the group of class-preserving automorphisms modulo inner.

The study of class-preserving automorphisms was initiated in \cite{bu}, where the first examples of groups with
nontrivial $Out_{cl}(G)$ were constructed. Since then many more examples were produced and some triviality results were
proved. In particular, using the classification of finite simple groups, Sah proved solvability of $Out_{cl}(G)$
\cite{sa} and Feit and Seitz verified triviality of $Out_{cl}(G)$ for simple $G$ \cite{fs}. Recently, class-preserving
automorphisms were used to produce a counter-example to the isomorphism problem of integer group rings \cite{he}.

Here we characterize class-preserving automorphisms as invariant twists on the group algebra stabilizing the unital
inclusion $k\to k[G]$.
\begin{prop}
The homomorphisms $G\to k[G]^\cross,\ Aut_{cl}(G)\to Aut_{bialg}(k[G])$ induce an isomorphism of crossed complexes of
groups $\Aut_{cl}(G)\to\Aut^{inn}_{bialg}(k[G])$.
\end{prop}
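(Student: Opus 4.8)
The plan is to identify both crossed modules explicitly and exhibit the two displayed maps as the components of a morphism that is bijective on each level. The source is the crossed module $\Aut_{cl}(G)=\bigl(G\stackrel{\partial}{\to}Aut_{cl}(G)\bigr)$ of (\ref{crmcl}), with $\partial(g)=(\ )^g$ and the tautological action of $Aut_{cl}(G)$ on $G$. Following section 4.3 of \cite{da2}, I would present the target $\Aut^{inn}_{bialg}(k[G])$ as the crossed module whose bottom group is the set of bialgebra automorphisms of $k[G]$ that are \emph{inner} as algebra automorphisms --- equivalently, that fix the centre $Z(k[G])$ pointwise --- whose top group is the group of grouplike elements of $k[G]^\cross$, and whose boundary sends an invertible $a$ to the conjugation $\mathrm{Ad}_a$. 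The two maps $G\to k[G]^\cross$ and $Aut_{cl}(G)\to Aut_{bialg}(k[G])$ of the statement are then the top- and bottom-level components, and the task is to show each is an isomorphism onto the corresponding level of the target.

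The top-level identification is the standard fact that the grouplike elements of $k[G]$ are exactly $G$, so $G\to k[G]^\cross$ is an isomorphism onto the top group. The heart of the argument is the bottom level: I must show that the image of $Aut_{cl}(G)\to Aut_{bialg}(k[G])$ is precisely the subgroup of centre-fixing bialgebra automorphisms. Any bialgebra automorphism permutes grouplikes and so is induced by a unique group automorphism $f$, giving injectivity; and $f$ fixes $Z(k[G])$ if and only if it fixes every conjugacy-class sum, i.e. if and only if it preserves every conjugacy class --- which is exactly the class-preserving condition. To justify the superscript $inn$ and produce the accompanying twist I would invoke Skolem--Noether: since $k[G]$ is semisimple and $f$ fixes the centre, $f=\mathrm{Ad}_u$ for some $u\in k[G]^\cross$, unique up to $Z(k[G])^\cross$, and the invariant (symmetric) twist $F=(u\otimes u)\Delta(u)^{-1}$ realizes $(f,F)$ as gauge-equivalent to the unit, placing the object in the inner part in the sense of section 4.3 of \cite{da2}.

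It remains to verify the crossed-module compatibilities. Both boundary squares commute because the inner group automorphism $(\ )^g$ agrees with $\mathrm{Ad}_g$ on grouplikes, so $\partial$ on the source matches $a\mapsto\mathrm{Ad}_a$ restricted to $G$ on the target. The two actions agree for the same reason: $Aut_{cl}(G)$ acts on $G$ by $g\mapsto f(g)$ while $Aut_{bialg}(k[G])$ acts on grouplikes by the identical formula. Bijectivity on both levels, already established, then upgrades the morphism to an isomorphism of crossed modules; on homotopy it reads $\pi_1=Z(G)$ and $\pi_0=Out_{cl}(G)$ on both sides, consistent with the remark following (\ref{crmcl}).

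The main obstacle is the twist bookkeeping in the second step. Proving that every centre-fixing bialgebra automorphism is class-preserving is immediate, but confirming that the target genuinely is the crossed module of section 4.3 of \cite{da2} requires care: one must check that the Skolem--Noether element $u$ yields a well-defined invariant symmetric twist whose gauge class depends only on the outer class in $Out_{cl}(G)$, and that the residual ambiguity $Z(k[G])^\cross$ in $u$ is matched correctly by the top-level action of grouplikes, so that the gauge structure on both sides coincides rather than merely agreeing on $\pi_0$ and $\pi_1$.
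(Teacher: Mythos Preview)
Your proposal is correct and follows essentially the same approach as the paper: the key step in both is that bialgebra automorphisms of $k[G]$ are exactly group automorphisms, the centre $Z(k[G])$ has the class-sum basis, and hence the centre-fixing (i.e.\ inner, by Skolem--Noether on a sum of matrix algebras) bialgebra automorphisms are precisely the class-preserving ones. The paper's proof is considerably terser---it records only the bottom-level identification via the map $Aut_{bialg}(k[G])\to Aut(Z(k[G]))$---while you additionally spell out the top-level identification (grouplikes $=G$) and the crossed-module compatibilities; this extra care is sound but not something the paper dwells on.
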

\begin{proof}
Since $k$ is algebraically closed (of characteristic zero), $k[G]$ is a sum of matrix algebras and the restriction map
$Out_{bialg}(k[G])\to Aut(Z(k[G]))$ is an isomorphism. The centre $Z(k[G])$ is spanned by class sums (sums over
conjugacy classes of $G$). Thus the kernel of $Aut_{bialg}(k[G])\to Out_{bialg}(k[G])$ coincides with the kernel of
$Aut(G)\to Aut(Z(k[G]))$ and is $Aut_{cl}(G)$.
\end{proof}

We will say that a class preserving automorphism $\phi$ is {\em supported} by a normal subgroup $N\subset G$ if there is an
element $x\in k[N]$ such that $\phi(g) = xgx^{-1}$. Below we give a cohomological description of class-preserving
automorphisms supported by abelian normal subgroups.
\begin{prop}
Let $A$ be an abelian normal subgroup of $G$. Then the group $$\{ x\in k[A]|\ (x\otimes x)\Delta(x)^{-1}\in
(k[A]^{\otimes 2})^G\}$$ is isomorphic to the group of 1-cocycles $\psi:G\to A$ (with respect to the natural $G$-action
on $A$) such that
\begin{equation}\label{absupcl}
\forall \chi\in \hat A,\quad \chi(\psi(s)) = 1,\quad \forall s\in St_G(\chi).
\end{equation}
Here $St_G(\chi) = \{g\in G|\ \chi(g(a)) = \chi(a) \ \forall a\in A\}$ is the stabilizer in $G$ of $\chi\in \hat A$.
Under that isomorphism, central $x$ correspond to coboundaries.
\end{prop}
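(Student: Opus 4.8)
The plan is to pass to the Fourier (minimal-idempotent) description of $k[A]$ and read both conditions off the coefficients. Since $A$ is abelian, the $p_\chi=\frac{1}{|A|}\sum_{a\in A}\chi(a)^{-1}a$, $\chi\in\hat A$, are orthogonal idempotents summing to $1$, and an element $x\in k[A]$ is invertible exactly when $x=\sum_{\chi\in\hat A}\lambda_\chi p_\chi$ with all $\lambda_\chi\in k^\cross$; thus $x\leftrightarrow\lambda$ identifies the invertibles with $(k^\cross)^{\hat A}$. From $\Delta(p_\chi)=\sum_\xi p_\xi\otimes p_{\chi\xi^{-1}}$ one gets $\Delta(x)=\sum_{\chi,\xi}\lambda_{\chi\xi}\,p_\chi\otimes p_\xi$, hence $(x\otimes x)\Delta(x)^{-1}=\sum_{\chi,\xi}\frac{\lambda_\chi\lambda_\xi}{\lambda_{\chi\xi}}\,p_\chi\otimes p_\xi$, which is the $2$-coboundary $\partial\lambda$ on $\hat A$ written in the idempotent basis. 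Because $k[A]$ is commutative, $x\mapsto(x\otimes x)\Delta(x)^{-1}$ is multiplicative, so the set in the statement is a subgroup of the invertibles; and since the conjugation action satisfies $g\cdot p_\chi=p_{g\cdot\chi}$ for the induced action on $\hat A$, membership in $(k[A]^{\otimes2})^G$ means precisely that the coefficient function $c(\chi,\xi)=\lambda_\chi\lambda_\xi/\lambda_{\chi\xi}$ is $G$-invariant.

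Next I would convert $G$-invariance of $c$ into the cocycle $\psi$. Comparing $c(g\cdot\chi,g\cdot\xi)$ with $c(\chi,\xi)$ shows that for each fixed $g$ the function $\mu_g(\chi):=\lambda_{g\cdot\chi}/\lambda_\chi$ is multiplicative in $\chi$, i.e.\ a character of $\hat A$; by Pontryagin duality $\hat{\hat A}=A$ there is a unique $\psi(g)\in A$ with $\mu_g(\chi)=\chi(\psi(g))$ for all $\chi$. A short computation gives $\mu_{gh}(\chi)=\mu_g(h\cdot\chi)\,\mu_h(\chi)$, which under duality is the crossed-homomorphism identity, so $\psi:G\to A$ is a $1$-cocycle for the natural action and $x\mapsto\psi$ is a group homomorphism. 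The stabilizer condition (\ref{absupcl}) then holds automatically on the image: if $g\in St_G(\chi)$ then $g\cdot\chi=\chi$, whence $\mu_g(\chi)=1$ and $\chi(\psi(g))=1$.

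The heart of the matter is the inverse construction, and this is where I expect the real work to be. Given a $1$-cocycle $\psi$ satisfying (\ref{absupcl}), I would build $\lambda$ one $G$-orbit at a time: choose a representative $\chi_0$ of each orbit in $\hat A$, assign $\lambda_{\chi_0}\in k^\cross$ freely, and set $\lambda_{g\cdot\chi_0}:=\chi_0(\psi(g))\,\lambda_{\chi_0}$. The one thing to check — but the decisive one — is well-definedness: if $g\cdot\chi_0=g'\cdot\chi_0$ then $s=g^{-1}g'\in St_G(\chi_0)$, and expanding $\psi(g')=\psi(gs)$ by the cocycle identity and applying $\chi_0$, using $\chi_0(\psi(s))=1$ from (\ref{absupcl}) together with $\chi_0(s^{-1}(\,\cdot\,))=\chi_0(\,\cdot\,)$, yields $\chi_0(\psi(g'))=\chi_0(\psi(g))$. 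Thus (\ref{absupcl}) is exactly the obstruction that must vanish for the reconstruction to succeed, and reversing the first two paragraphs shows the resulting $x$ satisfies the defining condition and maps back to $\psi$.

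Finally I would identify the kernel and the coboundaries. The homomorphism $x\mapsto\psi$ has $\psi=0$ exactly when $\lambda$ is constant on $G$-orbits, i.e.\ when $x\in k[A]^G$ is central in $k[G]$, and the orbit-by-orbit reconstruction shows each fiber is a torsor under this central subgroup; hence $x\mapsto\psi$ is surjective onto the cocycles obeying (\ref{absupcl}) and becomes an isomorphism once the central elements are divided out (the ambiguity being precisely the freedom $x\mapsto xgx^{-1}$ leaves in $x$). Evaluating the map on a group element $a\in A\subset k[A]$ gives $\lambda_\chi=\chi(a)$ and $\psi(g)=g^{-1}(a)a^{-1}$, a coboundary, and every coboundary arises this way; this realizes $B^1(G,A)$ inside the correspondence and yields the asserted matching of the distinguished $x$ with coboundaries. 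The only delicate point beyond the reconstruction is keeping the conventions for the $G$-action straight, so that $\psi$ emerges as an honest $1$-cocycle rather than one for the opposite action.
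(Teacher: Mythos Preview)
Your argument is correct. The paper takes a different, more cohomological route: it defines $\psi(g)=[x,g]=xgx^{-1}g^{-1}$ directly---observing that $G$-invariance of $(x\otimes x)\Delta(x)^{-1}$ forces conjugation by $x$ to preserve group-likes, so $\psi(g)\in k[A]\cap G=A$---and then notes that this $\psi$, viewed in $Z^1(G,k[A]^\times)$, is the coboundary of $x^{-1}$. The condition~(\ref{absupcl}) is then read off by computing $H^1(G,k[A]^\times)\cong\bigoplus_{\cO\subset\hat A}\widehat{St_G(\cO)}$ from the permutation-module structure of $k[A]^\times\cong k(\hat A)^\times$, so that (\ref{absupcl}) is exactly the statement that $[\psi]$ lies in the kernel of $H^1(G,A)\to H^1(G,k[A]^\times)$. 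Your Fourier-analytic approach trades this cohomological packaging for explicitness: you unwind everything in the idempotent basis and, unlike the paper, actually carry out the inverse (orbit-by-orbit) construction, which makes transparent that (\ref{absupcl}) is precisely the well-definedness obstruction for lifting $\psi$ to a $\lambda$. Two small remarks: your $\psi$ and the paper's $[x,g]$ differ by a twist by the $G$-action (one is $g\cdot(\text{the other})$), which is the convention issue you flag at the end; and your observation that the map $x\mapsto\psi$ is only an isomorphism after dividing out the central elements is the accurate statement---the paper's phrasing is loose here, and its proof, like yours, really establishes a surjection with central kernel.
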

\begin{proof}
For an element $x\in k[A]$ such that $(x\otimes x)\Delta(x)^{-1}$ is $G$-invariant, define a 1-cocycle $\psi(g) = [x,g]
= xgx^{-1}g^{-1}$. Since $A$ is normal and since $xGx^{-1} = G$ it takes its values in $k[A]\cap G = A$. By
definition, the class of $\psi$ lies in the kernel of the homomorphism
\begin{equation}\label{qq}
H^1(G,A)\to H^1(G,k[A]^\cross)
\end{equation}
induced by the natural inclusion $A\to k[A]^\cross$. Since the field $k$ is algebraically closed, the algebra $k[A]$ is
isomorphic to the function algebra $k(\hat A)$. Moreover, this isomorphism preserves $G$-actions. Thus $k[A]^\cross$ is a
permutation $G$-module and $$H^1(G,k[A]^\cross) = \oplus_{\cO\subset \hat A} H^1(St_G(\cO),k^\cross) = \oplus_{\cO\subset
\hat A} \widehat{St_G(\cO)},$$ where the sum is taken over $G$-orbits in $\hat A$ and $St_G(\cO)$ is the stabilizer of an
orbit $\cO$. So the class of a cocycle $\psi$ is in the kernel of (\ref{qq}) iff $$\forall \chi\in \hat A,\quad
\chi(\psi(s)) = 1,\quad \forall s\in St_G(\chi),$$ which proves the proposition.
\end{proof}

\begin{exam}Quadratic class-preserving automorphisms.
\end{exam}
Let $V$ be a vector spaces over $\F=\F_2$ (the two element field). Let $b:V\otimes V\to \F$ be a non-degenerate
symmetric bilinear form on $V$ and $q:V\to \F$ an associated quadratic form: $$q(u+v) - q(u) - q(v) = b(u,v),\quad
u,v\in V.$$ Note that the existence of $q$ implies that $b$ is alternative: $b(v,v) = 0$ for any $v\in V$. Let $Q =
Aut(V,b)$ be the group of automorphisms of $b$ (a symplectic group) and $G=Q\ltimes V^*$ be a semi-direct product (an
affine symplectic group). Define an element in the group ring $k[V^*]$ of the dual space $V^*$ by $x = \sum_{v\in
V}(-1)^{q(v)}p_v$. Here the $p_v = \frac{1}{2^{dim(V)}}\sum_{l\in V^*}(-1)^{l(v)}l$ are minimal idempotents in $k[V^*]$
corresponding to elements of $V$. Then $$(x\otimes x)\Delta(x)^{-1} = \sum_{u,v\in V}(-1)^{b(u,v)}p_u\otimes p_v$$ is
$G$-invariant. The corresponding 1-cocycle $\psi:G\to V^*$ has the form
\begin{equation}\label{quadr}
\psi(g)(v) = q(v) - q(g(v)).
\end{equation}
Since $g$ preserves the linearisation of $q$, $\psi(g)$ is linear in $v$. Now it is straightforward to see that $[x,g]$
is given by the formula (\ref{quadr}): $$xg(x^{-1}) = \sum_{v\in V}(-1)^{q(v)}p_v\sum_{u\in V}(-1)^{-q(g(u))}p_u =
\sum_{v\in V}(-1)^{q(v)- q(g(v))}p_v = q(v) - q(g(v)).$$ Obviously the cocycle (\ref{quadr}) satisfies the condition
(\ref{absupcl}). To see that it is non-trivial we will follow the arguments of \cite{gr}. Recall that for a vector
$v\in V$ the map ({\em symplectic transvection}) $\tau_v(u) = u - b(v,u)v$ is an automorphism of $b$. Note that
symplectic transvections generate the group $Aut(V,b)$.

Write $V$ as $U\oplus U^*$ so that the form $b$ becomes $b((u,l),(v,m)) = l(v) - m(u)$. Define $q$ by $q(u,l) = l(u)$.
For a vector $(u,l)\in V$ the corresponding symplectic transvection has the form: $$\tau_{(u,l)}(v,m) = (v + (m(u) -
l(v))u, m + (m(u) - l(v))l).$$ Now the value of the 1-cocycle $\psi$ on the transvection is $$q(\tau_{(u,l)}(v,m)) -
q(v,m) = (m(u) + l(v))(l(u) + 1),$$ which coincides with $b((u,l),(v,m))(q(u,l) + 1)$. Thus we have the following
formula $$\psi(\tau_v)(u) = b(v,u)(q(v)+1),\quad v\in V.$$ The cocycle $\psi$ is a coboundary if there is a linear
function $l\in V^*$ such that $$l(\tau_v(u)) - l(u) = -b(v,u)l(v)$$ coincides with $\psi(\tau_v)(u) = b(v,u)(q(v)+1)$,
which gives a contradiction $l(v) = q(v) +1$ to the linearity of $l$.

\begin{exam}\label{ccpa} Cubic class-preserving automorphism of $3^5.M_{11}$.
\end{exam}
Let $V$ be a vector spaces over $\F=\F_3$ (the three element field). Let $\tau:V\otimes V\otimes V\to \F$ be a non-degenerate
symmetric tri-linear form on $V$ and $c:V\to \F$ an associated cubic form: $$c(u+v) - c(u) - c(v) = \tau(u,u,v) + \tau(u,v,v),\quad
u,v\in V.$$ Note that $c$ is a {\em depolarization} of $2\tau$:
$$c(u+v+w) - c(u+v) - c(u+w) - c(v+w) + c(u) + c(v) + c(w) = 2\tau(u,v,w)$$
and the form $\tau$ satisfies: $\tau(v,v,v) = 0$ for any $v\in V$. Let $Q =
Aut(V,\tau)$ be the group of automorphisms of the form $\tau$  and $G=Q\ltimes V^*$ be a semi-direct product. As in the previous example define an element in the group ring $k[V^*]$ of the dual space $V^*$ by $x = \sum_{v\in
V}\omega^{c(v)}p_v$. Here $\omega\in k$ is a primitive cubic root of unity and the $p_v = \frac{1}{3^{dim(V)}}\sum_{l\in V^*}\omega^{l(v)}l$ are minimal idempotents in $k[V^*]$
corresponding to elements of $V$. Since the group $Aut(V,\tau)$ preserves the function $\tau(u,u,v) + \tau(u,v,v)$, the element of $k[V^*]$: $$(x\otimes x)\Delta(x)^{-1} = \sum_{u,v\in V}(-1)^{\tau(u,u,v) + \tau(u,v,v)}p_u\otimes p_v$$ is
$G$-invariant. The corresponding 1-cocycle $\psi:G\to V^*$ again has the form
\begin{equation}\label{cubic}
\psi(g)(v) = c(v) - c(g(v)).
\end{equation}
Since $g$ preserves the linearisation of $q$ (which is $\tau(u,u,v) + \tau(u,v,v)$), $\psi(g)$ is linear in $v$.
In contrast with the previous example it is much more difficult to find a tri-linear form with a non-trivial 1-cocycle $\psi$.

Following \cite{wa}, define on $\F_{3^5}$ (the field of 243 elements), considered as a vectors space $V$ over $\F_3$, a symmetric tri-linear form $$\tau(x,y,z) = Tr(xyz^9 + xy^9z + x^9yz).$$ Here $Tr:\F_{3^5}\to \F_3$ is the trace of the field extension $\F_3\subset \F_{3^5}$. Let $\epsilon\in \F_{3^5}$ be a primitive root of unity of degree 11 such that $Tr(\epsilon) = -1$. It can be checked directly that the following linear operators $r,s,t$ on $V$ preserve $\tau$: $$s(v) = \epsilon v,\quad t(v) = v^3.$$ To define $r$ note that the powers $\epsilon, \epsilon^3,\epsilon^4,\epsilon^5,\epsilon^9$ span the space $V$. In this basis $r$ has the form $$r(\epsilon) = -\epsilon,\quad r(\epsilon^3) = -\epsilon^9,\quad r(\epsilon^4) = -\epsilon^5,$$ $$r(\epsilon^5) = -\epsilon^3,\quad r(\epsilon^9) = -\epsilon^4.$$ It was proved in \cite{wa} that $r,s,t$ generate the group $Aut(V,\tau)$ and that the group $Aut(V,\tau)$ is isomorphic to the Mathieu group $M_{11}$.  Now define $c$ by $c(x) = Tr(x^{11})$. It can be checked that $r^2,s,t$ stabilize $c$ (it was also proved in \cite{wa} that $r^2,s,t$ generate the subgroup of $M_{11}$ isomorphic to $PSL_2(11)$). Now if we assume that the 1-cocycle (\ref{cubic}) is a coboundary $$c(v) - c(g(v)) = l(v) - l(g(v)),\quad \forall v\in V$$ we should have a non-zero linear function $l\in V^*$ invariant under $s$, which is not possible. Indeed, writing $l$ as $l(v) = Tr(\lambda v)$ for some $\lambda\in \F_{3^5}$ we would have $Tr(\lambda\epsilon v) = Tr(\lambda v)$ for all $v\in V$, which implies $\lambda=0$.

\section{Invariant anti-symmetric twists and triangular structures on group algebras}

It follows from the proposition (\ref{separ}) that up to gauge transformations invariant anti-symmetric twists on the group algebra $k[G]$ of a group $G$ of odd order correspond to normal abelian subgroups $A\subset G$ with non-degenerate
alternative bi-multiplicative $G$-invariant forms $\alpha:A\times A\to k^\cross$: $$F_{(A,\beta)} =
\frac{1}{|A|}\sum_{a_1,a_2\in A}\beta(a_1,a_2)a_1\otimes a_2.$$

A bijective correspondence between non-degenerate bi-multiplicative forms $\beta:A\times A\to k^\cross$ on an abelian
group $A$ and non-degenerate bi-multiplicative forms $b:\hat A\times \hat A\to k^\cross$ on its group of characters
$\hat A$ can be defined explicitly by the rule: for any $x\in A$ there is a unique $\chi\in \hat A$ (and vice versa) such
that $$\beta(x,y) = \chi(y)\quad \forall y\in A\quad \Leftrightarrow\quad b(\chi,\psi) = \psi(x)\quad \forall
\psi\in\hat A.$$ This correspondence allows us to give a different presentation for anti-symmetric twists. Denote by
$p_\chi = \frac{1}{|A|}\sum_{x\in A}\chi^{-1}(x)x$ the minimal idempotent of the group algebra $k[A]$ corresponding to
the character $\chi\in\hat A$: $$yp_\chi = \chi(y)p_\chi,\quad \forall y\in A.$$
\begin{lem}
The anti-symmetric twist
$$F_{(A,\beta)} = \frac{1}{|A|}\sum_{a_1,a_2\in A}\beta(a_1,a_2)a_1\otimes a_2$$ can be written as
$$\sum_{\chi,\psi\in\hat A}b(\chi,\psi)p_\chi\otimes p_\psi.$$
\end{lem}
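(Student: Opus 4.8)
The plan is to verify the identity by a direct substitution of the definitions of the idempotents $p_\chi$ into the right-hand side and then rearranging the double sum to recover the left-hand side. First I would recall that the claimed correspondence between $\beta$ and $b$ is precisely the one defined just above the lemma: for each $x \in A$ there is a unique $\chi \in \hat A$ with $\beta(x,y) = \chi(y)$ for all $y \in A$, and dually $b(\chi,\psi) = \psi(x)$ for all $\psi \in \hat A$. This sets up a bijection $A \to \hat A$, say $x \mapsto \chi_x$, that will be the change of summation variable carrying one expression into the other.

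The key computation is to expand
$$\sum_{\chi,\psi\in\hat A}b(\chi,\psi)\, p_\chi\otimes p_\psi = \frac{1}{|A|^2}\sum_{\chi,\psi\in\hat A}b(\chi,\psi)\sum_{x,y\in A}\chi^{-1}(x)\psi^{-1}(y)\, x\otimes y,$$
and then to swap the order of summation so that the sum over $\chi$ and $\psi$ is performed first, for each fixed $x,y \in A$. The inner sum is $\frac{1}{|A|^2}\sum_{\chi,\psi}b(\chi,\psi)\chi^{-1}(x)\psi^{-1}(y)$, and I would evaluate it using the defining relation $b(\chi,\psi) = \psi(x')$, where $\chi = \chi_{x'}$ under the bijection. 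Writing $b(\chi_{x'},\psi) = \psi(x')$ turns the $\psi$-summation into $\sum_{\psi}\psi(x')\psi^{-1}(y) = \sum_\psi \psi(x' y^{-1})$, which by orthogonality of characters equals $|A|$ if $y = x'$ and $0$ otherwise. This collapses the $\psi$-sum and forces $x' = y$, after which the remaining $\chi$-sum (equivalently the $x'$-sum under the bijection) should reproduce exactly the factor $\beta(y, x)$ or $\beta(x,y)$ up to the bookkeeping of which variable pairs with which. Matching against $F_{(A,\beta)} = \frac{1}{|A|}\sum_{a_1,a_2}\beta(a_1,a_2)a_1 \otimes a_2$ then completes the identification.

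The main obstacle I expect is purely a matter of bookkeeping rather than conceptual difficulty: keeping track of the two dual bijections simultaneously (one from the $\chi$-side via $b(\chi,\psi)=\psi(x)$ and one from the $y$-side via $\beta(x,y)=\chi(y)$) and confirming that the orthogonality relation fires on the correct pair of indices, so that the surviving single sum matches $\beta$ in the right arguments and in the right order. A secondary point to watch is the normalization, since the right-hand side carries two factors of $\frac{1}{|A|}$ from the two idempotents while the left-hand side carries only one; the single factor of $|A|$ produced by the character-orthogonality collapse must account exactly for this discrepancy. Once the orthogonality relation is applied correctly and the indices are renamed, the equality of the two expressions should follow immediately, so I would present this as a short, self-contained calculation.
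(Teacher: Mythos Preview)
Your proposal is correct and follows essentially the same route as the paper: expand the idempotents, use the bijection $x\leftrightarrow\chi$ to rewrite $b(\chi,\psi)=\psi(x)$, collapse the $\psi$-sum by character orthogonality, and then identify the surviving coefficient with $\beta(a_1,a_2)$. The only point the paper makes explicit that you leave as ``bookkeeping'' is the final use of anti-symmetry, $\beta(a_2^{-1},a_1)=\beta(a_1,a_2)$, to get the arguments in the right order; you should state this step rather than just flagging it.
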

\begin{proof}
Indeed,
\begin{equation}\label{la}
\sum_{\chi,\psi\in\hat
A}b(\chi,\psi)p_\chi\otimes p_\psi = \frac{1}{|A|}\sum_{a_1,a_2\in A}\sum_{\chi,\psi\in\hat A}b(\chi,\psi)\chi(a_1)\psi(a_2))a_1\otimes a_2.
\end{equation}
Since $$b(\chi,\psi)\chi(a_1)\psi(a_2)) = \psi(x)b(x,a_1)\psi(a_2) = b(x,a_1)\psi(xa_2)$$ and
$$\frac{1}{|A|}\sum_{\psi\in\hat A}\psi(xa) = \delta_{x,a^{-1}}$$ the expression (\ref{la}) can be rewritten as
$$\frac{1}{|A|}\sum_{a_1,a_2\in A}\beta(a_2^{-1},a_1)a_1\otimes a_2.$$ It remains to notice that $\beta(a_2^{-1},a_1) =
\beta(a_1,a_2)$.
\end{proof}

For a subgroup $B$ of an abelian group $A$ with a non-degenerate alternative bi-multiplicative form $\beta:A\times
A\to k^\cross$ denote by $$B^\perp = \{ a\in A|\quad \beta(a,b) = 1, \forall b\in B\}$$ the {\em orthogonal complement}.
A subgroup $B$ is called {\em isotropic} if $B\subset B^\perp$, i.e. the restriction of $\alpha$ on $B$ is trivial. A subgroup
$B$ is {\em Lagrangian} if $B = B^\perp$. A Lagrangian subgroup $B\subset A$ fits into a short exact sequence $$B\to
A\to \hat B$$ with the last morphism being induced by the form $\beta$: $a\mapsto (x\in B\mapsto \beta(a,b)).$ Suppose
that there exists a multiplicative splitting $\hat B\to A$. Then we have a {\em Lagrangian decomposition} $$A\simeq B\oplus\hat
B$$ where elements of $A$ can be written as pairs $(x,\chi)$ where $x\in B$ and $\chi\in \hat B$. The form $\beta$ in this
presentation take the following shape $\beta((x,\chi),(y,\psi)) = \chi(y)\psi(x)^{-1}$.
\begin{lem}
An abelian group with an alternative bi-multiplicative non-degenerate form has a Lagrangian decomposition.
\end{lem}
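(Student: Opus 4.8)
The plan is to reduce to the case of a finite abelian $p$-group and then induct on the order, splitting off a hyperbolic plane at each step; this is the finite-abelian analogue of the existence of a symplectic basis. Throughout I take $A$ finite (the only case relevant here, where $A$ is a subgroup of a finite $G$), so that non-degeneracy of $\beta$ means precisely that $a\mapsto\beta(a,-)$ is an isomorphism $A\to\hat A$. First I would reduce to a $p$-group: writing $A=\bigoplus_p A_p$ for the primary decomposition, elements of coprime order pair trivially (the value $\beta(a,b)$ has order dividing both $\mathrm{ord}(a)$ and $\mathrm{ord}(b)$, hence is $1$), so $\beta$ is the orthogonal sum of its restrictions $\beta|_{A_p}$, each again alternative and non-degenerate. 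A Lagrangian decomposition of every $A_p$ assembles into one for $A$, so from now on $A$ is a $p$-group.

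Now I induct on $|A|$, the case $A=1$ being trivial. I would choose $a\in A$ of maximal order $p^n$, the exponent of $A$. By non-degeneracy the composite $A\to\hat A\to\widehat{\langle a\rangle}\cong\Z/p^n$ is surjective, so there is $b\in A$ with $\beta(a,b)$ a primitive $p^n$-th root of unity; since $\beta(a,b)^{\mathrm{ord}(b)}=1$ and $p^n$ is the exponent, $b$ has order exactly $p^n$. I then verify that $H=\langle a,b\rangle\cong(\Z/p^n)^2$: if $a^i=b^j$, pairing with $a$ and using $\beta(a,a)=1$ gives $\beta(a,b)^{-j}=1$, so $p^n\mid j$ and the element is trivial, whence $\langle a\rangle\cap\langle b\rangle=1$. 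On $H$ the form is the perfect pairing between the two (isotropic) cyclic factors, so $\beta|_H$ is non-degenerate. Standard counting then yields $A=H\oplus H^\perp$ with $\beta|_{H^\perp}$ non-degenerate: indeed $H\cap H^\perp=\mathrm{rad}(\beta|_H)=1$, and $|H^\perp|=|A|/|H|$ because $H^\perp=\ker(A\to\hat H)$ with the map onto.

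By the inductive hypothesis $H^\perp$ admits a Lagrangian decomposition $H^\perp=B'\oplus C'$ into complementary isotropic subgroups. Since $\langle a\rangle$ and $B'$ are isotropic and mutually orthogonal (as $B'\subseteq H^\perp$), the subgroup $B=\langle a\rangle\oplus B'$ is isotropic, and likewise $C=\langle b\rangle\oplus C'$; moreover $B\oplus C=H\oplus H^\perp=A$. Each summand is isotropic of order $\sqrt{|A|}$, so from $B\subseteq B^\perp$ together with $|B^\perp|=|A|/|B|=|B|$ I get $B=B^\perp$, and similarly $C=C^\perp$; thus both are Lagrangian. The induced pairing $\beta|_{B\times C}$ is perfect, identifying $C$ with $\hat B$, which is exactly a Lagrangian decomposition $A\simeq B\oplus\hat B$.

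I expect the crux to be the choice and analysis of the hyperbolic pair $(a,b)$: one must take $a$ of \emph{maximal} order so that the partner $b$ detected by non-degeneracy has the same order $p^n$ and the pairing $\langle a\rangle\times\langle b\rangle$ is perfect, guaranteeing that $H$ splits off as a non-degenerate summand of type $(\Z/p^n)^2$ rather than a smaller or mixed piece; once $H$ is correctly isolated, the orthogonal decomposition and the inductive assembly are routine. It is tempting to instead obtain a Lagrangian $B$ cheaply as a maximal isotropic subgroup (any $a\in B^\perp\setminus B$ would enlarge it, since $\beta$ is alternative) and then correct an arbitrary group-theoretic splitting of $B\to A\to\hat B$ to an isotropic one via the earlier fact that every alternative form is the alternation of a bi-multiplicative form; but the existence of the group splitting is itself the substantive point, and it is precisely this that the inductive hyperbolic-plane construction supplies directly, by producing the isotropic complement $C$ together with $B$.
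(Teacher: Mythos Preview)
Your proof is correct and follows essentially the same inductive ``split off a hyperbolic plane'' strategy as the paper. The only cosmetic difference is that the paper avoids the preliminary reduction to $p$-groups by choosing $a$ so that $\langle a\rangle$ is a pure (hence direct) summand---phrased as: whenever $x^m=a$ then $x\in\langle a\rangle$---and then splits off $\langle a\rangle\oplus\widehat{\langle a\rangle}$ directly; your choice of $a$ of maximal order in a $p$-group achieves the same thing, and your explicit construction of $b$ and verification that $H=\langle a,b\rangle$ is a non-degenerate summand just unpacks what the paper's one-line ``the surjection $A\to\widehat{\langle a\rangle}$ splits'' is asserting.
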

\begin{proof}
We prove this by induction on the order of the group. Let $a\in A$ be an element with the property: for any $x\in A$ such
that $x^m=a$ there is $n$ so that $a^n = x$. Then the inclusion $\langle a\rangle\to A$ of the cyclic subgroup generated by $a$
splits as well as the surjection $A\to \widehat{\langle a\rangle}$ induced by the form $\beta$ on $A$. So we can write $A\simeq A'\oplus
\langle a\rangle\oplus \widehat{\langle a\rangle}$ where $A'\simeq \langle a\rangle^\perp/\langle a\rangle$ with the induced form. By the induction it follows that, $A'$ has a Lagrangian
decomposition, and hence so has $A$.
\end{proof}

With any Lagrangian decomposition $A\simeq B\oplus\hat B$ there are associated two more presentations for the twist
$F_{(A,\beta)}$, which, in a way, are mixtures of the previous two. Define $p_\chi = \frac{1}{|B|}\sum_{x\in
B}\chi(x)^{-1}x \in k[B]$.
\begin{lem}
The anti-symmetric twist
$$F_{(A,\beta)} = \frac{1}{|A|}\sum_{a_1,a_2\in A}\beta(a_1,a_2)a_1\otimes a_2$$ can be written as
$$F_{(A,\beta)} = \sum_{\chi,\psi\in \hat B}p_{\psi}\chi\otimes p_{\chi^{-1}}\psi.$$
\end{lem}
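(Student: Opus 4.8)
The plan is to expand the proposed right-hand side using the Lagrangian decomposition $A\simeq B\oplus\hat B$ together with the explicit shape of the form $\beta$ in that presentation, and then match it term-by-term against the defining sum for $F_{(A,\beta)}$.

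First I would write a general element of $A$ as a pair $(x,\chi)$ with $x\in B$ and $\chi\in\hat B$, using the chosen splitting $\hat B\to A$, so that the sum over $A\times A$ becomes a double sum over $B\times B$ and $\hat B\times\hat B$. Under this identification the preceding discussion gives $\beta((x,\chi),(y,\psi)) = \chi(y)\psi(x)^{-1}$ and $|A| = |B|^2$, so the left-hand side reads
$$F_{(A,\beta)} = \frac{1}{|B|^2}\sum_{x,y\in B}\sum_{\chi,\psi\in\hat B}\chi(y)\psi(x)^{-1}\,(x,\chi)\otimes(y,\psi).$$

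Next I would expand the right-hand side. By definition $p_\psi = \frac{1}{|B|}\sum_{x\in B}\psi(x)^{-1}x\in k[B]$, and multiplying by the group element $\chi\in\hat B\subset A$ gives $p_\psi\chi = \frac{1}{|B|}\sum_{x\in B}\psi(x)^{-1}(x,\chi)$; similarly $p_{\chi^{-1}}\psi = \frac{1}{|B|}\sum_{y\in B}\chi(y)(y,\psi)$. Substituting these into $\sum_{\chi,\psi\in\hat B}p_\psi\chi\otimes p_{\chi^{-1}}\psi$ and collecting coefficients of $(x,\chi)\otimes(y,\psi)$ produces precisely the double sum displayed above, which finishes the proof.

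The computation is essentially bookkeeping, so the only real point of care, and the step I expect to be the mildest obstacle, is keeping the two roles of $\hat B$ straight: its elements appear both as characters of $B$ inside the idempotents $p_\chi$ and as honest group elements of $A$ through the splitting $\hat B\to A$. Once the products $p_\psi\chi$ and $p_{\chi^{-1}}\psi$ are correctly rewritten as $B$-indexed sums of the pairs $(x,\chi)$ and $(y,\psi)$, with the coefficients $\psi(x)^{-1}$ and $\chi(y)$, the match with the weighting $\beta((x,\chi),(y,\psi))=\chi(y)\psi(x)^{-1}$ is immediate.
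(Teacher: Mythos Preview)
Your proof is correct and follows essentially the same route as the paper: expand the idempotents $p_\psi$ and $p_{\chi^{-1}}$ as sums over $B$, rewrite $p_\psi\chi\otimes p_{\chi^{-1}}\psi$ as $\frac{1}{|B|^2}\sum_{x,y}\psi(x)^{-1}\chi(y)\,(x,\chi)\otimes(y,\psi)$, and recognise the coefficient as $\beta((x,\chi),(y,\psi))$ while $|A|=|B|^2$. The paper presents only the forward direction (right-hand side to left-hand side) in one displayed line, but the computation is identical to yours.
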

\begin{proof}
Indeed, $$\sum_{\chi,\psi\in \hat B}p_{\psi}\chi\otimes p_{\chi^{-1}}\psi = \frac{1}{|B|^2}\sum_{\chi,\psi\in \hat
B}\sum_{x,y\in B}\chi(y)\psi^{-1}(x)x\chi\otimes y\psi =$$ $$\frac{1}{|A|}\sum_{a_1,a_2\in A}\beta(a_1,a_2)a_1\otimes
a_2.$$
\end{proof}
Analogously (using the symmetry between $B$ and $\hat B$) we can write $$F_{(A,\beta)} = \sum_{x,y\in
B}xp_{y^{-1}}\chi\otimes yp_{x},$$ where $p_x = \frac{1}{|B|}\sum_{\chi\in \hat B}\chi(x)^{-1}\chi \in k[\hat B]$.

As the next example shows, not any Lagrangian subgroup fits into a Lagrangian decomposition.
\begin{exam}
\end{exam}
Let $A$ be $\Z/\Z_{p^2}\oplus \Z/\Z_{p^2}$ with the standard alternative form $\beta((1,0),(0,1)) = \varepsilon$ for
a primitive root $\varepsilon$ of degree $p^2$. The subgroup $B = pA$ is Lagrangian but the extension $B\to A\to \hat
B$ does not split.

Of course we can always split the short exact sequence $B\to A\to \hat B$ set-theoretically. Choosing a section
$$s:\hat B\to A,\quad \beta(s(\chi),x) = \chi(x),\ \forall x\in B, \chi\in \hat B$$ we can identify $A$ with
$B\times\hat B$ equipped with the product $$(x,\chi)(y,\psi) = (xy\Gamma(\chi,\psi),\chi\psi),$$ where $\Gamma:\hat B\times\hat
B\to B$ is a 2-cocycle defined by the splitting $s$: $$s(\chi\psi) = s(\chi)s(\psi)\Gamma(\chi,\psi).$$ The
bi-multiplicative form $\beta$ on $A$ transports to $B\times\hat B$: $$\beta((x,\chi),(y,\psi)) =
\chi(y)\psi(x)^{-1}\overline\beta(\chi,\psi),$$ where $\overline\beta(\chi,\psi) = \beta(s(\chi),s(\psi)).$

Repeating the calculations for the twist on Lagrangian decomposition, we will have the following form for the twist
$F_{(A,\beta)}$ on $B\times\hat B$:
\begin{equation}\label{twlag}
F_{(A,\beta)} = \sum_{\chi,\psi\in \hat B}\overline\beta(\chi,\psi)p_{\psi}\chi\otimes p_{\chi^{-1}}\psi.
\end{equation}

We finish this section with a well-known remark on triangular structures on group algebras. We include the proof of the proposition below because we will use its argument later on. 
\begin{prop}
The set $\Tr(k[G])$ of triangular structures on the group algebra is isomorphic to the set of pairs $(A,\alpha)$
consisting of a normal abelian subgroup $A\subset G$ and a non-degenerate skew-symmetric bi-multiplicative
$G$-invariant form $\alpha:A\times A\to k^\cross$.
\end{prop}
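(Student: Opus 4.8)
The plan is to match triangular structures with the invariant anti-symmetric twists studied in this section, and then to read the data $(A,\alpha)$ off such a twist. Recall that a triangular structure is an invertible $R\in k[G]^{\otimes 2}$ satisfying the quasi-triangularity axioms together with $R_{21}R=1$. I would first note that any quasi-triangular $R$ obeys the quantum Yang--Baxter equation $R_{12}R_{13}R_{23}=R_{23}R_{13}R_{12}$, which is exactly the twist cocycle condition $(R\otimes 1)(\Delta\otimes\mathrm{id})R=(1\otimes R)(\mathrm{id}\otimes\Delta)R$; hence $R$ is a twist. Since $k[G]$ is cocommutative, the axiom $\Delta^{op}(x)R=R\Delta(x)$ reduces to the requirement that $R$ commute with $\Delta(k[G])$, i.e. that $R$ be invariant under the diagonal conjugation action of $G$. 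Thus every triangular structure is an invariant twist with $R_{21}=R^{-1}$, and the bijection of the proposition will match this description with the explicit twists $F_{(A,\alpha)}$.

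For the passage from a pair to an $R$-matrix I take a normal abelian $A\subset G$ and a non-degenerate skew-symmetric $G$-invariant bi-multiplicative form $\alpha$, and set $R=F_{(A,\alpha)}$. Writing it in the idempotent form $R=\sum_{\chi,\psi\in\hat A}b(\chi,\psi)p_\chi\otimes p_\psi$ of the lemma above, the hexagons $(\Delta\otimes\mathrm{id})R=R_{13}R_{23}$ and $(\mathrm{id}\otimes\Delta)R=R_{13}R_{12}$ reduce, via $\Delta(p_\chi)=\sum_{\rho\sigma=\chi}p_\rho\otimes p_\sigma$, to the bi-multiplicativity of $b$; the identity $R_{21}R=\sum_{\chi,\psi}b(\psi,\chi)b(\chi,\psi)\,p_\chi\otimes p_\psi$ collapses to $1\otimes 1$ precisely because $\alpha$, equivalently $b$, is skew-symmetric; and $(g\otimes g)R(g^{-1}\otimes g^{-1})=R$ is exactly the $G$-invariance of $\alpha$, using that $A$ is normal. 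Hence $F_{(A,\alpha)}$ is a triangular structure, and since $\alpha$ is recovered from the coefficients of $R$ while $A$ is its support, distinct pairs give distinct $R$.

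For the converse I start with a triangular $R$, already an invariant twist by the first step. Regarding $(\mathrm{id},R)$ as a twisted automorphism, Theorems \ref{age} and \ref{absup} force its supporting subgroup to be normal abelian, say $A$, carrying a non-degenerate bi-multiplicative form with $G$-invariant alternation; normality also follows directly from the fact that invariance under diagonal conjugation makes the support conjugation-stable. Expanding $R=\sum_{\chi,\psi\in\hat A}b(\chi,\psi)p_\chi\otimes p_\psi$ and comparing with the hexagons shows $b$ is bi-multiplicative, the triangular condition $R_{21}R=1$ forces $b(\chi,\psi)b(\psi,\chi)=1$, i.e. $\alpha$ skew-symmetric, and non-degeneracy is automatic because a degenerate form would factor $R$ through a proper quotient of $A$, contradicting that $A$ is the support. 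This yields the inverse assignment $R\mapsto(A,\alpha)$.

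The main obstacle is upgrading the gauge-level statement of Theorem \ref{absup} to an honest bijection of sets: I must show that an invariant triangular twist is literally supported on a genuine subgroup $A\subset G$, with $R\in k[A]^{\otimes 2}$ and $R=F_{(A,\alpha)}$ on the nose, rather than merely up to gauge. The heart of the argument is therefore the rigidity of the support of a triangular $R$ --- that its left and right supports coincide and form a subgroup on which $R$ restricts --- after which the invariance and anti-symmetry constraints leave no residual gauge freedom and the two assignments above are mutually inverse.
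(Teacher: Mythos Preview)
Your approach differs from the paper's and carries a gap you yourself flag. The paper does not pass through the gauge classification of twists (Theorems \ref{age}, \ref{absup}) at all; instead it invokes Proposition (4.4.1) of \cite{da2}, which asserts that for any triangular structure $R$ on a Hopf algebra the \emph{support sub-bialgebra} $H_R$ (the minimal sub-bialgebra with $R\in H_R\otimes H_R$) is commutative, cocommutative, and normal. For $k[G]$ this forces $H_R=k[A]$ for a normal abelian $A\subset G$, and the canonical isomorphism $b:H_R^*\to H_R$ attached to $R$ then restricts to an isomorphism $\hat A\to A$ of group-like elements, which is exactly a non-degenerate bi-multiplicative form $\alpha$; $G$-invariance and skew-symmetry of $\alpha$ are read off from $H$-invariance and self-duality of $b$. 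This is a direct set-level bijection with no gauge ambiguity to manage.

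By contrast, your route through Theorem \ref{absup} only yields a pair $(A,\beta)$ up to gauge equivalence, and you then have to argue that for a triangular $R$ the gauge freedom collapses. You name this as ``the rigidity of the support of a triangular $R$'' but do not supply the argument; and the cleanest way to supply it is precisely the support-sub-bialgebra statement from \cite{da2} that the paper uses. Without that input (or an independent proof that the left and right supports of a triangular $R$ coincide and form a sub-bialgebra), your third paragraph is circular: you expand $R=\sum_{\chi,\psi}b(\chi,\psi)p_\chi\otimes p_\psi$ with $p_\chi\in k[A]$, presupposing $R\in k[A]^{\otimes 2}$ on the nose before having established it. Once that structural fact is in hand, the rest of your verification (bi-multiplicativity from the hexagons, skew-symmetry from $R_{21}R=1$, $G$-invariance from diagonal conjugation invariance, non-degeneracy from minimality of the support) is correct and amounts to the same unpacking of $b$ that the paper performs.
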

\begin{proof}
By the proposition (4.4.1) of \cite{da2} a support sub-bialgebra (minimal triangular sub-bialgebra) of a triangular structure
$R$ is a commutative and cocommutative normal sub-bialgebra $H_R$. Thus it is $k[A]$ for an abelian subgroup $A =
G(H_R)\subset G$, which must be normal since $H_R$ is normal. The isomorphism $b:H_R^*\to H_R$ induces (and is induced
by) the homomorphism of groups of group-like elements $G(b):G(H_R^*)\to G(H_R) = A$. Note that $G(H_R^*) =
Hom(A,k^\cross)$ is the group of characters of $A$. Thus the isomorphism $G(b):Hom(A,k^\cross)\to A$ induces (and is
induced by) a non-degenerate bi-multiplicative form $\alpha:A\times A\to k^\cross$. The $G$-invariance of $\alpha$ follows
from the $H$-invariance of $b$ while skew-symmetricity of $\alpha$ is equivalent to self-duality of $b$.
\end{proof}

\section{Group structure on anti-symmetric twists}

We start with the following abstract situation. Let $T$ be a group and $t:T\to T$ be an automorphism of order 2.
Define a left $T$-action on itself via
\begin{equation}\label{act}
x^g = t(g)^{-1}xg.
\end{equation}
In particular, the stabiliser of the identity element is $T^t = \{s\in T|\ t(s) = s\}$ the subgroup of $\tau$-invariant
elements. Note also that this action preserves the subset $$X = \{x\in T|\ t(x) = x^{-1}\}$$ and that $X$ is closed
under power maps $x\mapsto x^m$ for $m\in\Z$.
\begin{lem}\label{abstra}
Suppose that $(\ )^2$ is bijective on $X$. Then $$X\cap T^t = \{1\},\quad T = T^t X.$$ In particular, for any $x,y\in
X$, there is a unique factorisation: $$xy = s(x,y)(x\circ y),\quad s(x,y)\in T^t, x\circ y\in X$$ with $$(x\circ y)^2 =
yx^2y.$$ The operations $s,\circ$ satisfy $$s(x,y)s(x\circ y,z) = s(y,z)s(x,y\circ z),$$ $$(x\circ y)\circ z =
x^{s(y,z)}\circ(y\circ z),\quad (x\circ y)^s = x^s\circ y^s.$$ Here $x^s = s^{-1}xs$. Moreover if the operation $\circ$ is commutative, then $$[x,y] = s(x,y)s(y,x)^{-1}.$$ Here $[x,y] = xyx^{-1}y^{-1}$.
\end{lem}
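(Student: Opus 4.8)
The plan is to deduce everything from a single structural fact: bijectivity of squaring on $X$ forces a unique factorisation $T=T^tX$, after which all the stated identities follow formally from associativity in $T$ together with uniqueness of the factorisation. First I would record the closure properties. The identity lies in $X$, and for $s\in T^t$, $x\in X$ the conjugate $x^s=s^{-1}xs$ again lies in $X$, since $t(x^s)=s^{-1}t(x)s=s^{-1}x^{-1}s=(x^s)^{-1}$; this is the special case $g=s$ of the already-noted stability of $X$ under the action. The equality $X\cap T^t=\{1\}$ is then immediate: if $t(x)=x$ and $t(x)=x^{-1}$, then $x^2=1=1^2$ with $x,1\in X$, so injectivity of squaring on $X$ gives $x=1$.

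The step I expect to be the main obstacle is the decomposition $T=T^tX$. Here the key observation is that for every $g\in T$ the element $t(g)^{-1}g$ lies in $X$, because $t\bigl(t(g)^{-1}g\bigr)=g^{-1}t(g)=\bigl(t(g)^{-1}g\bigr)^{-1}$. Using bijectivity of squaring I would let $x\in X$ be the unique square root of $t(g)^{-1}g$ in $X$ and set $s=gx^{-1}$. A direct check gives $s\in T^t$: indeed $t(s)=t(g)t(x)^{-1}=t(g)x$, and this equals $gx^{-1}$ precisely because $x^2=t(g)^{-1}g$, i.e. $gx^{-2}=t(g)$. This yields the existence of $g=sx$. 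For uniqueness I would run the computation backwards: from any $g=sx$ with $s\in T^t$, $x\in X$ one gets $t(g)^{-1}g=xs^{-1}sx=x^2$, so $x$ is forced to be the square root of $t(g)^{-1}g$ and $s=gx^{-1}$ is then determined. Thus bijectivity of squaring is exactly what makes the factorisation both exist and be unique.

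With the factorisation established I would define $s(x,y)\in T^t$ and $x\circ y\in X$ to be the two factors of $xy$, so that $xy=s(x,y)(x\circ y)$. Applying the square-root formula from the previous step to $g=xy$ gives $(x\circ y)^2=t(xy)^{-1}xy=t(y)^{-1}t(x)^{-1}xy=yx\cdot xy=yx^2y$, using $t(x)=x^{-1}$ and $t(y)=y^{-1}$. The three displayed identities I would then extract purely formally from uniqueness. For the cocycle relation and the twisted associativity of $\circ$, I would factor $(xy)z$ and $x(yz)$ into their $T^t$- and $X$-parts and compare: the delicate point is that commuting the $T^t$-factor $s(y,z)$ to the left past $x$ replaces $x$ by its conjugate $x^{s(y,z)}\in X$, which is exactly what produces the term $x^{s(y,z)}$ in $(x\circ y)\circ z=x^{s(y,z)}\circ(y\circ z)$ and is also where the main bookkeeping in the $T^t$-component lies. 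The equivariance $(x\circ y)^s=x^s\circ y^s$ for $s\in T^t$ I would obtain by conjugating $xy=s(x,y)(x\circ y)$ by $s$: since conjugation by $s\in T^t$ preserves $T^t$ and $X$, it respects the factorisation and therefore acts as stated on the $X$-component.

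Finally, in the commutative case $x\circ y=y\circ x$, the two products $xy=s(x,y)(x\circ y)$ and $yx=s(y,x)(x\circ y)$ share the same $X$-part, so $[x,y]=xy(yx)^{-1}=s(x,y)(x\circ y)(x\circ y)^{-1}s(y,x)^{-1}=s(x,y)s(y,x)^{-1}$, which is the last assertion. Throughout, the only nontrivial input is the square-root step of the second paragraph; the remaining claims are bookkeeping on the two components of the factorisation, driven by associativity and by conjugation-invariance of $T^t$ and $X$.
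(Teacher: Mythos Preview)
Your proposal is correct and follows essentially the same route as the paper: show $t(g)^{-1}g\in X$, take its unique square root to produce the factorisation $g=sx$, verify $s\in T^t$, and then read off all the identities (including the formula $(x\circ y)^2=yx^2y$ and the commutator formula in the commutative case) by comparing the two $T^tX$-factorisations of $xyz$ and using that conjugation by $T^t$ preserves both factors. Your uniqueness argument, recovering $x$ from $t(g)^{-1}g=x^2$, is in fact a little more explicit than the paper's one-line ``since $X\cap T^t=\{1\}$''.
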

\begin{proof}
First we notice that in the assumption of the lemma (the bijectivity of $(\ )^2$) $X$ coincides with the orbit
$\{t(g)^{-1}g, g\in T\}$ of the identity element. Indeed, solving $y^2 = x$ for $x\in X$ we will be able to write $x$
as $t(y)^{-1}y$ (since $y$ is also in $X$). The next thing to observe is that the intersection $X\cap T^t$ is trivial:
for $x\in X\cap T^t$, we have $x = t(x) = x^{-1}$ or $x^2 = 1$, which implies $x = 1$.

Now we write $T$ as the product $T^tX$. For $g\in T$, solve $y^2 = t(g)^{-1}g$ in $X$. Then for $s = gy^{-1}$, we have
$$t(s)s^{-1} = t(g)t(y)^{-1}yg^{-1} = t(g)y^2g^{-1} = 1.$$ So $s$ is in $T^t$. The decomposition $g = sy$ is unique
since $X\cap T^t = \{1\}$.

Since $T^t$ normalizes $X$ (the action of $T^t$ on $X$ via (\ref{act}) is the conjugation), we can write $xyz$ as an
element of $T^tX$ in two ways: $$(xy)z = s(x,y)(x\circ y)z = s(x,y)s(x\circ y,z)((x\circ y)\circ z),$$ $$x(yz) =
xs(y,z)(y\circ z) = s(y,z)x^{s(y,z)}(y\circ z) = s(y,z)s(x^{s(y,z)},(y\circ z))(x^{s(y,z)}\circ(y\circ z))$$ which
gives the equations of the lemma.

For $x\circ y = y\circ x$ we have $$[x,y] = (xy)(yx)^{-1} = s(x,y)(x\circ y)(y\circ x)^{-1}s(y,x)^{-1}  = s(x,y)s(y,x)^{-1}.$$
\end{proof}

Now let $T$ be the group $Aut^1_\Tw(k[G])$ of gauge classes of invariant twists with $t$ being the automorphism of
transposition of tensor functors of a twist. Then by the section \ref{clpr}, the stabiliser $T^t$ is the group
$Out_{cl}(G)$ of outer class-preserving automorphisms of $G$ and $X$ is (isomorphic to) the set of abelian normal
subgroups of $G$ equipped with $G$-invariant non-degenerate bimultiplicative form. If $G$ is odd then the order of any
element from $X$ is also odd so we are in the situation of the lemma \ref{abstra}. This defines a binary operation
$\circ$ on $X$: $$F_{(A_1,\alpha_1)}\circ F_{(A_2,\alpha_2)} = F_{(A,\alpha)},$$ where $$F_{(A,\alpha)}^2 =
F_{(A,\alpha^2)} = F_{(A_1,\alpha_1)}F_{(A_2,\alpha_2)}^2F_{(A_1,\alpha_1)}.$$ Note that the right hand side belongs to
the group algebra of the subgroup $A_1A_2\subset G$. Since the subgroups $A_i$ are normal abelian, their product
$A_1A_2$ is meta-abelian with the commutant $[A_1,A_2]\subset A_1\cap A_2\subset A_1A_2$.

First we describe the support group $A$. Let $$K = \{c\in A_1\cap A_2|\ \alpha_1(x,c) = \alpha_2(x,c)^{-1}\ \forall
x\in A_1\cap A_2\}$$ be the kernel of the bimultiplicative form on $A_1\cap A_2$, which is the product of the
restrictions of $\alpha_1$ and $\alpha_2$.
\begin{lem}\label{supp}
The formula $$\pi(a_1a_2,c) = \frac{\alpha_1(a_1,c)}{\alpha_2(a_2,c)},\ a_i\in A_i,\ c\in K$$ defines a homomorphism
$\pi:A_1A_2\to \hat K$ with an abelian kernel $A = ker(\pi)$. The formula $$\alpha(u_1u_2,v_1v_2) =
\alpha_1(u_1,v_1)\alpha_2(u_2,v_2)$$ defines a non-degenerate alternative bi-multiplicative form on $A$.
\end{lem}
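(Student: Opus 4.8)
The plan is to treat the four assertions in turn, with everything ultimately resting on the $G$-invariance of $\alpha_1,\alpha_2$ and on one structural fact, $[A_1,A_2]\subseteq K$. Write $\gamma=\alpha_1\alpha_2$ for the alternating form on $A_1\cap A_2$ whose radical is $K$, and recall $[u,v]=uvu^{-1}v^{-1}$. For well-definedness of $\pi$, if $a_1a_2=a_1'a_2'$ then $d:=(a_1')^{-1}a_1=a_2'a_2^{-1}\in A_1\cap A_2$, and the ratio of the two candidate values of $\pi(a_1a_2,c)$ is exactly $\alpha_1(d,c)\alpha_2(d,c)=\gamma(d,c)$, which is $1$ since $c\in K=\mathrm{rad}(\gamma)$; multiplicativity in $c$ is immediate, so $\pi(g)\in\hat K$. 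For the homomorphism property I would write $a_1a_2\,b_1b_2=(a_1b_1)(a_2^{b_1}b_2)$ with $a_2^{b_1}=b_1^{-1}a_2b_1\in A_2$; then $\pi(gh,c)/\big(\pi(g,c)\pi(h,c)\big)=\alpha_2(a_2,c)/\alpha_2(a_2^{b_1},c)$, and this is $1$ because $c\in A_1\cap A_2$ is centralised by $b_1\in A_1$ while $\alpha_2$ is $G$-invariant, so $\alpha_2(a_2^{b_1},c)=\alpha_2(a_2,c)$.

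The engine for the remaining claims is a pair of adjunction identities extracted from $G$-invariance: for $a_1,x\in A_1$ and $b_2\in A_2$,
$$\alpha_1([a_1,b_2],x)=\alpha_1(a_1,[b_2,x]),$$
obtained by writing $[a_1,b_2]=a_1\,(b_2a_1b_2^{-1})^{-1}$ and using that conjugation by $b_2$ preserves $\alpha_1$; symmetrically $\alpha_2([a_2,b_1],y)=\alpha_2(a_2,[b_1,y])$. Since $A_1\cap A_2$ lies in both abelian subgroups it is central in $A_1A_2$, so $[\,,\,]$ is bimultiplicative and takes values in $[A_1,A_2]\subseteq A_1\cap A_2$. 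Feeding $z\in A_1\cap A_2$ into the two identities and using $[b_2,z]=[a_1,z]=1$ (as $A_1,A_2$ are abelian) gives $\alpha_1(z,[a_1,b_2])=\alpha_2(z,[a_1,b_2])=1$, hence $\gamma(z,[a_1,b_2])=1$ for every $z$; thus $[A_1,A_2]\subseteq K$. This inclusion is what makes the construction cohere.

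To see that $A=\ker(\pi)$ is abelian, take $g,h\in A$ with arbitrary splittings $g=g_1g_2$, $h=h_1h_2$; bimultiplicativity gives $[g,h]=[g_1,h_2][g_2,h_1]\in[A_1,A_2]\subseteq A_1$. Applying the $\alpha_1$-adjunction to both factors turns $\alpha_1([g,h],x)$ into $\alpha_1(g_1,[h_2,x])\,\alpha_1(h_1,[g_2,x])^{-1}$ for $x\in A_1$. Now $[h_2,x],[g_2,x]\in[A_1,A_2]\subseteq K$, and since $g,h\in\ker(\pi)$ we may replace $\alpha_1(g_1,-)$ by $\alpha_2(g_2,-)$ and $\alpha_1(h_1,-)$ by $\alpha_2(h_2,-)$ on $K$, obtaining $\alpha_2(g_2,[h_2,x])\,\alpha_2(h_2,[g_2,x])^{-1}$. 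A second use of $G$-invariance (conjugating by $x$) yields the symmetry $\alpha_2(g_2,[h_2,x])=\alpha_2(h_2,[g_2,x])$, so $\alpha_1([g,h],x)=1$ for all $x\in A_1$ and non-degeneracy of $\alpha_1$ forces $[g,h]=1$.

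Finally, for the form I would evaluate $\alpha_1(g_1,h_1)\alpha_2(g_2,h_2)$ not on an arbitrary splitting but on a \emph{compatible} one, meaning $g=g_1g_2$ with $\alpha_1(g_1,z)=\alpha_2(g_2,z)$ for all $z\in A_1\cap A_2$. For $g\in A$ the character $z\mapsto\alpha_1(g_1,z)\alpha_2(g_2,z)^{-1}$ is trivial on $K$, and altering the splitting by $d$ multiplies it by $\gamma(d,-)$; since $\gamma$ is non-degenerate on $(A_1\cap A_2)/K$ a compatible splitting exists and is unique up to a shift by $K$. With $g$ compatible the value $\alpha(g,h)=\alpha_1(g_1,h_1)\alpha_2(g_2,h_2)$ is independent of the residual $K$-shift (here one uses $h\in\ker\pi$) and of the splitting of $h$, so $\alpha$ is well-defined; it is alternative because $\alpha_i(g_i,g_i)=1$, and bimultiplicative because $[A_1,A_2]\subseteq K$ guarantees that a product of compatible splittings is compatible modulo $K$. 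The main obstacle is non-degeneracy, which I would settle in three steps: pairing $g\in\mathrm{rad}(\alpha)$ against $A\cap A_2=K^{\perp}$ and $A\cap A_1$ and invoking $(K^{\perp})^{\perp}=K$ for the non-degenerate $\alpha_1,\alpha_2$ shows $g_1,g_2\in K$; compatibility together with $g_1,g_2\in\mathrm{rad}(\gamma)$ then forces $\alpha_1(g,z)=\alpha_2(g,z)=1$ for all $z\in A_1\cap A_2$, so that $g=g\cdot 1$ is itself a compatible splitting; pairing this against all of $A$ (whose first components exhaust $A_1$, since $A_2\to\hat K$ is onto) gives $\alpha_1(g,A_1)=1$ and hence $g=1$. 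Keeping straight the several orthogonal complements inside $A_1\cap A_2$ is the fiddliest part.
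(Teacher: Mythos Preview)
Your argument is correct and, for the most part, follows the same line as the paper's proof. In particular, your verification that $\pi$ is well-defined, your computation showing $[A_1,A_2]\subseteq K$ via the adjunction identity $\alpha_1([a_1,b_2],x)=\alpha_1(a_1,[b_2,x])$, and your proof that $A$ is abelian all parallel the paper (you are in fact more thorough: the paper asserts $[A_1,A_2]\subseteq K$ without proof, and does not check the homomorphism property of $\pi$ at all).

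The one place where the presentations genuinely diverge is the treatment of the form $\alpha$. The paper introduces an auxiliary group $A_1\bowtie A_2$ of ordered pairs, on which the formula $\alpha_1(u_1,v_1)\alpha_2(u_2,v_2)$ is tautologically well-defined; it then sets up a $3\times 3$ exact diagram identifying $A$ with $\ker(\Pi)/K$, where $\Pi:A_1\bowtie A_2\to\widehat{A_1\cap A_2}$ is given by the same formula as your $\pi$, and concludes well-definedness and non-degeneracy simultaneously by computing $\ker(\Pi)^\perp$ inside $A_1\bowtie A_2$ (it is the anti-diagonal copy of $A_1\cap A_2$). Your ``compatible splittings'' are exactly the elements of $\ker(\Pi)$, and your $K$-shift is the anti-diagonal $K$: so the two arguments have the same content, but you avoid the auxiliary group and the diagram in favour of a direct elementwise analysis. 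Your non-degeneracy argument via the double-orthogonal $(K^\perp)^\perp=K$ inside each $A_i$, followed by the observation that first components of elements of $A$ exhaust $A_1$, is a nice hands-on substitute for the paper's diagrammatic one-liner. Both approaches buy the same conclusions; the paper's is more conceptual and shorter once the diagram is in place, while yours is self-contained and makes every step visible.
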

\begin{proof}
The definition of $\pi$ does not depend on the factorisation $a_1a_2$. Then
\newline
for $d\in A_1\cap A_2$: $$\frac{\alpha_1(a_1d,c)}{\alpha_2(a_2,c)} =
\frac{\alpha_1(a_1,c)\alpha_1(d,c)}{\alpha_2(a_2,c)} =\frac{\alpha_1(a_1,c)}{\alpha_2(d,c)\alpha_2(a_2,c)} =
\frac{\alpha_1(a_1,c)}{\alpha_2(a_2d,c)}.$$ To see that $A$ is abelian note first that the commutant $[A_1,A_2]$
lies in $K$. To check that the commutant $[A,A]$ is trivial, it is enough to verify that $b_1([u_1u_2,v_1v_2],y) = 0$ for
any $u_1u_2,v_1v_2\in A$ and any $y\in A_1$. Writing $$[u_1u_2,v_1v_2] = [u_1,v_2][u_2,v_1] = [u_1,v_2][v_1,u_2]^{-1}$$
we need to verify that $b_1([u_1,v_2],y) = b_1([v_1,u_2],y)$. Indeed, by $G$-invariance of $b_i$ and the defining
relations for $u_1u_2,v_1v_2$ (together with $[A_1,A_2]\subset K$), we have the chain of equalities: $$b_1([u_1,v_2],y)
= b_1(u_1,[y,v_2])^{-1} = b_2(u_2,[y,v_2])^{-1} = $$ $$b_2([y,u_2],v_2) = b_1([y,u_2],v_1) = b_1(y,[v_1,u_2])^{-1} =
b_1([v_1,u_2],y).$$

We need to verify that the value of the form $\alpha$ does not depend on factorisations of its arguments. To see that, we give another presentation of the group $A$.
Denote by $A_1\bowtie A_2$ the group of pairs $(x_1,x_2)$, $x_i\in A_i$ with the product: $$(x_1,x_2)(y_1,y_2) =
(x_1y_1[x_2,y_1]^{\frac{1}{2}},[x_2,y_1]^{\frac{1}{2}}x_2y_2).$$ The map $$A_1\bowtie A_2\to G,\quad (x_1,x_2)\mapsto
x_1x_2$$ is a group homomorphism with the image $A_1A_2$. The group $A$ fits into a commutative diagram with exact rows
and columns:
\begin{equation}\label{exdiagm}
\xymatrix{ K \ar[r] \ar[d] & A_1\cap A_2 \ar[r] \ar[d] & \widehat{((A_1\cap A_2)/K)}  \ar[d] \\
ker(\Pi) \ar[r] \ar[d] &  A_1\bowtie A_2 \ar[r]^\Pi \ar[d] & \widehat{(A_1\cap A_2)}  \ar[d] \\
A \ar[r] & A_1A_2 \ar[r]^\pi & \hat{K} }
\end{equation}
The homomorphism $\Pi$ is given by the same formula as the map $\pi$. We can define a skew-symmetric pairing on $A_1\bowtie A_2$ using the formula $$\alpha((u_1,u_2),(v_1,v_2)) = \alpha_1(u_1,v_1)\alpha_2(u_2,v_2).$$ It follows from the non-degeneracy of $\alpha_i$ that the orthogonal to $ker(\Pi)$ in $A_1\bowtie A_2$ is the anti-diagonal image of $A_1\cap A_2$. Hence that the kernel of the restriction of the pairing from $A_1\bowtie A_2$ to $ker(\Pi)$ coincides with the anti-diagonal image of $K$. Thus the pairing restricts from $ker(\Pi)$ to a non-degenerate pairing on $ker(\Pi)/K=A$.
\end{proof}

\begin{prop}
The support subgroup and the bi-multiplicative form corresponding to the twist $F_{(A_1,\alpha_1)}\circ
F_{(A_2,\alpha_2)}$ are the group $A$ and the form $\alpha$ defined in the lemma \ref{supp}.
\end{prop}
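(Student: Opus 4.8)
The plan is to reduce the statement to one explicit product of twists and then invoke the uniqueness built into Lemma \ref{abstra}. By the definition of $\circ$, the twist $F_{(A_1,\alpha_1)}\circ F_{(A_2,\alpha_2)}$ is the \emph{unique} element of $X$ whose square in $T=Aut^1_\Tw(k[G])$ equals $F_{(A_1,\alpha_1)}F_{(A_2,\alpha_2)}^2F_{(A_1,\alpha_1)}$, uniqueness being exactly the bijectivity of $(\ )^2$ on $X$ for odd $|G|$. Since $F_{(A,\alpha)}$ and its square $F_{(A,\alpha^2)}$ have the same support $k[A]$ (the form $\alpha^2$ is again non-degenerate in odd order), it suffices to check that the pair $(A,\alpha)$ of Lemma \ref{supp} lies in $X$ and that, as gauge classes of twists,
$$F_{(A,\alpha^2)}=F_{(A_1,\alpha_1)}F_{(A_2,\alpha_2)}^2F_{(A_1,\alpha_1)}.$$
Membership in $X$ is almost immediate: Lemma \ref{supp} gives that $A=\ker(\pi)$ is abelian and that $\alpha$ is non-degenerate and alternative, while normality of $A$ and $G$-invariance of $\alpha$ follow from normality of $A_1,A_2$ and $G$-invariance of $\alpha_1,\alpha_2$, since $\pi$ and $K$ are then visibly $G$-equivariant. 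Granting the displayed identity, injectivity of squaring forces $F_{(A_1,\alpha_1)}\circ F_{(A_2,\alpha_2)}=F_{(A,\alpha)}$, which is the asserted statement about support and form.

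The heart of the argument is the product identity, which I would prove by direct computation in $k[A_1A_2]^{\otimes 2}$. Expanding each factor as $F_{(A_i,\alpha_i)}=\frac{1}{|A_i|}\sum_{a,b\in A_i}\alpha_i(a,b)\,a\otimes b$ and multiplying out, I would move all $A_1$-letters past the $A_2$-letters, at the cost of commutators lying in $[A_1,A_2]\subset A_1\cap A_2$, and then collapse the resulting internal character sums by orthogonality, $\frac{1}{|A_i|}\sum_{x\in A_i}\chi(x)=\delta_{\chi,1}$, using non-degeneracy of $\alpha_1,\alpha_2$. The surviving terms are precisely those whose two legs lie in $A=\ker(\pi)$, and the residual scalar reads off as $\alpha(u,v)^2$ on $A$. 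To keep the bookkeeping honest, and in particular to see independence of the non-unique factorisation $u=u_1u_2$, I would carry the whole computation on the auxiliary group $A_1\bowtie A_2$ of Lemma \ref{supp} together with the homomorphism $\mu:(x_1,x_2)\mapsto x_1x_2$: the product then becomes $(\mu\otimes\mu)$ applied to the standard abelian twist attached to the skew pairing $\alpha$ on $A_1\bowtie A_2$, and the radical computation of Lemma \ref{supp} (the orthogonal of $\ker(\Pi)$ is the anti-diagonal image of $A_1\cap A_2$, meeting $\ker(\Pi)$ in the anti-diagonal $K$) collapses this push-forward onto $F_{(A,\alpha^2)}$, supported on $A$.

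I expect the main obstacle to be exactly this commutator bookkeeping. When the $A_1$- and $A_2$-letters are interchanged, the commutators in $A_1\cap A_2$ are distributed asymmetrically between the two tensor legs, and one must verify that the half-commutator correction $[x_2,y_1]^{\frac12}$ built into the multiplication of $A_1\bowtie A_2$ (available only because $|G|$ is odd) symmetrises these contributions, so that the product is genuinely anti-symmetric and lands on $\ker(\Pi)$ rather than on all of $A_1A_2$. This is the same chain of $G$-invariance identities that closes the proof of Lemma \ref{supp}, now applied to the coefficients of the twist rather than to the pairing; any leftover factor is a coboundary supported on $k[A_1\cap A_2]$, hence trivial as a gauge class, and the identity follows.
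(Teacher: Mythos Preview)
Your plan is correct in outline and rests on the same reduction as the paper: both sides of the identity lie in $X$, squaring is injective on $X$, so it suffices to identify the anti-symmetric element $R=F_{(A_1,\alpha_1)}F_{(A_2,\alpha_2)}^2F_{(A_1,\alpha_1)}$ with $F_{(A,\alpha^2)}$. Note in passing that $R$ is already anti-symmetric \emph{as an element} (since $t(F_i)=F_i^{-1}$ gives $t(R)=R^{-1}$), hence a triangular structure; by the classification of triangular structures it is literally equal to $F_{(B,\beta)}$ for some pair $(B,\beta)$, so your hedge about ``leftover factors being coboundaries'' is unnecessary and slightly misleading---no gauge ambiguity enters.

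Where you and the paper diverge is in how this identification is made. You propose a single brute-force expansion of the fourfold product, pushed through $A_1\bowtie A_2$ to keep the half-commutators straight, collapsing everything by orthogonality. The paper instead splits the task in two. First, to pin down the support, it uses the $R$-matrix property: the support of $R$ is the image of $l\mapsto(l\otimes I)(R)$, so it suffices to show $(I\otimes\pi)(R)=1$, and this follows from the elegant one-line computation $(I\otimes\pi)(F_{(A_i,\alpha_i)})(I\otimes c)=c^{\pm 1}$ for $c\in K$. This bypasses almost all of the commutator bookkeeping you flagged as the main obstacle. Second, to read off the form, the paper evaluates $(\chi_{u_1u_2}\otimes\chi_{v_1v_2})(R)$ directly and simplifies using $G$-invariance of the $\alpha_i$; this is a computation of the same flavour as yours but with the bookkeeping localised to a single scalar rather than to a double sum over group elements.

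Your route via $A_1\bowtie A_2$ would certainly work and has the virtue of making the structural role of the diagram in Lemma~\ref{supp} manifest, but the paper's two-step approach is shorter and avoids having to symmetrise the commutator contributions by hand. If you want to turn your sketch into a full proof, the paper's trick of testing against $\pi$ for the support and against character pairs for the form gives a cleaner organisation than a monolithic expansion.
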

\begin{proof}
First we prove that $A$ contains the support subgroup. Note that the supports of $F_{(A_1,\alpha_1)}\circ
F_{(A_2,\alpha_2)}$ and $R = F_{(A_1,\alpha_1)}F_{(A_2,\alpha_2)}^2F_{(A_1,\alpha_1)}$ coincide. To see that $R$
belongs to $k[A]\otimes k[A]$ it is enough to check that $(I\otimes\pi)(R) = 1$. Indeed, as an $R$-matrix, $R$ defines a
homomorphism of Hopf algebras $l\mapsto (l\otimes I)(R)$ whose image is the group algebra of the support subgroup of
$R$. The condition $(I\otimes\pi)(R) = 1$ will mean that the support subgroup is in the kernel of $\pi$. We check this
condition by showing that $(I\otimes\pi)(F_{(A_1,\alpha_1)}) = (I\otimes\pi)(F_{(A_2,\alpha_2)})^{-1}$ or, more
precisely, by showing that for any $c\in K$ the evaluation $(I\otimes\pi)(F_{(A_1,\alpha_1)})(I\otimes c)$ is the
inverse of $(I\otimes\pi)(F_{(A_2,\alpha_2)})(I\otimes c)$ (here we think of $k[\hat K]$ as the function algebra
$k[K]^*$). Since for $y\in A_1$ $\pi(y)(c) = \alpha_1(y,c)$ $$(I\otimes\pi)(F_{(A_1,\alpha_1)})(I\otimes c) =
\frac{1}{|A_1|}\sum_{x,y\in A_1}\alpha_1(x,y)x\otimes\pi(y)(c) = $$ $$\frac{1}{|A_1|}\sum_{x,y\in
A_1}\alpha_1(x,y)\alpha_1(y,c)x = \sum_{x\in A_1}(\frac{1}{|A_1|}\sum_{y\in A_1}\alpha_1(xc^{-1},y))x.$$ The inner sum
$\frac{1}{|A_1|}\sum_{y\in A_1}\alpha_1(xc^{-1},y)$ is the $\delta$-function $\delta_{x,c}$. Hence
$(I\otimes\pi)(F_{(A_1,\alpha_1)})(I\otimes c) = c$. Similarly, using that $\pi(y)(c) = \alpha_2(y,c)^{-1}$ for $y\in
A_2$, we get that $(I\otimes\pi)(F_{(A_2,\alpha_2)})(I\otimes c) = c^{-1}$.

Let $\chi_{u_1u_2}$ be the character on $A$ corresponding to an element $u_1u_2\in A$ via the form $\alpha$:
$$\chi_{u_1u_2}(x_1x_2) = \alpha_1(u_1,x_1)\alpha_2(u_2,x_2),\quad x_1x_2\in A.$$ To show that $\alpha^2$ is the form
corresponding to $R\in k[A]^{\otimes 2}$ we need to check that $(\chi_{u_1u_2}\otimes\chi_{v_1v_2})(R) =
\alpha(u_1u_2,v_1v_2)^{-2}$ for any $u_1u_2,v_1v_2\in A$. Write $R$ as $$\frac{1}{|A_1|^2|A_2|}\sum_{x_i,z_i\in A_1\\
y_i\in A_2}\alpha_1(x_1,x_2)\alpha_2(y_1,y_2)^2\alpha_1(z_1,z_2)x_1y_1z_1\otimes x_2y_2z_2.$$ Since $x_iy_iz_i = x_i\
^{y_i}{z_i}y_i$ (where $^{y_i}{z_i} = y_iz_iy_i^{-1}$), we have for $(\chi_{u_1u_2}\otimes\chi_{v_1v_2})(R)$
$$\frac{1}{|A_1|^2|A_2|}\sum_{x_i,z_i\in A_1\\ y_i\in A_2}\alpha_1(x_1,x_2)\alpha_2(y_1,y_2)^2\alpha_1(z_1,z_2)$$
$$\alpha_1(u_1,x_1\ ^{y_1}{z_1})\alpha_2(u_2,y_1)\alpha_1(v_1,x_2\ ^{y_2}{z_2})\alpha_2(v_2,y_2),$$ which by
multiplicativity and invariance of the forms $\alpha_i$ equals $$\frac{1}{|A_1|^2|A_2|}\sum_{x_i,z_i\in A_1\\ y_i\in
A_2}\alpha_1(x_1,x_2u_1^{-1})\alpha_1(z_1,^{y_1^{-1}}{u_1^{-1}}z_2)$$
$$\alpha_2(y_1,y_2)^2\alpha_2(u_2,y_1)\alpha_1(v_1,x_2\ ^{y_2}{z_2})\alpha_2(v_2,y_2).$$ This expression can be
simplified since $\frac{1}{|A_1|}\sum_{x_1\in A_1}\alpha_1(x_1,x_2u_1^{-1})$ is the $\delta$-function
$\delta_{x_2,u_1}$ and $\frac{1}{|A_1|}\sum_{x_1\in A_1}\alpha_1(z_1,^{y_1^{-1}}{u_1^{-1}}z_2)$ is the
$\delta$-function $\delta_{z_2,^{y_1^{-1}}{u_1}}$:
\begin{equation}\label{express}
\frac{1}{|A_2|}\sum_{y_i\in A_2}\alpha_2(y_1,y_2)^2\alpha_2(u_2,y_1)\alpha_1(v_1,u_1\
^{y_2y_1^{-1}}{u_1})\alpha_2(v_2,y_2).
\end{equation}
Now, writing $^{y_2y_1^{-1}}{u_1} = y_2y_1^{-1}\ ^{u_1}{(y_2y_1^{-1})^{-1}}u_1$ and using that $^{u_1^{-1}}{v_1} = v_1$
by commutativity of $A$, we have $$\alpha_1(v_1,u_1\ ^{y_2y_1^{-1}}{u_1}) =
\alpha_1(v_1,u_1)\alpha_1(v_1,y_2y_1^{-1})\alpha_1(v_1,^{u_1}{(y_2y_1^{-1})^{-1}})\alpha_1(v_1,u_1) = $$
$$\alpha_1(v_1,y_2y_1^{-1})\alpha_1(^{u_1^{-1}}{v_1},(y_2y_1^{-1})^{-1})\alpha_1(v_1,u_1)^2 = \alpha_1(v_1,u_1)^2.$$
This allows us to simplify the expression (\ref{express}) further: $$\frac{1}{|A_2|}\sum_{y_i\in
A_2}\alpha_2(y_1,y_2^2u_2^{-1})\alpha_1(v_1,u_1)^2\alpha_2(v_2,y_2).$$ Finally $\frac{1}{|A_2|}\sum_{y_1\in
A_2}\alpha_2(y_1,y_2^2u_2^{-1})$ is the $\delta$-function $\delta_{y_2,u_2}$, which leaves us with
\begin{equation}
\alpha_1(v_1,u_1)^2\alpha_1(v_2,u_2)^2 = \alpha(u_1u_2,v_1v_2)^{-2}.
\tag*{\qedhere}
\end{equation}
\end{proof}

Clearly the operation $\circ$ on anti-symmetric twists is commutative. Thus the commutator of any two anti-symmetric
twists $F_1 = F_{(A_1,b_1)}$ and $F_2 = F_{(A_2,b_2)}$ can be written as $$[F_1,F_2] = (u\otimes u)\Delta(u)^{-1}$$ for
some $u\in k[G]$. An explicit form of such $u$ is closely related to a certain invariant of the pair
$(A_1,b_1),(A_2,b_2)$, which we are going to describe now. The construction is very similar to the one for twists on universal enveloping algebras from \cite{da2}. Denote by $B = [A_1,A_2]$ the commutant of $A_1$ and $A_2$ in $G$. To a pair of characters
$(\chi_1,\chi_2)$ on $B$ we assign an element $[x_1,x_2]\in B$, where $x_i\in A_i$ are defined by $$b_i(x_i,x) =
\chi_i(x),\quad \forall x\in B.$$ The elements $x_i$ are defined up to subgroups $$B^\perp_{b_i} = \{y\in A_i|\
b_i(y,x) = 1\ \forall x\in B\}.$$ Note that these subgroups have the following commutation property:
$$[B^\perp_{b_1},A_2] = [A_1,B^\perp_{b_2}] = 1.$$ Indeed, for arbitrary $x\in B^\perp_{b_1}, y\in A_2$ and for any
$z\in A_1$ $$b_1([x,y],z) = b_1(x,[y,z]) = 1.$$ Hence $[x,y] = 1$. Similarly for $[A_1,B^\perp_{b_2}]$. Thus the
commutator $[x_1,x_2]$ depends only on $(\chi_1,\chi_2)$. Define a map
\begin{equation}\label{c}
c:\hat B\times\hat B\times\hat B\to k^\cross
\end{equation}
by $c(\chi_1,\chi_2,\chi_3) = \chi_3([x_1,x_2])$. This map is multiplicative in each variable. It's also $G$-invariant
by the construction. Moreover this map is symmetric. Indeed, let $y_i\in\ga_i$ be such that $$b_i(y_i,u) =
\chi_3(u),\quad \forall u\in B.$$ Then $$\chi_3([x_1,x_2]) = b_1(y_1,[x_1,x_2]) = b_1([x_2,y_1],x_1) =
\chi_1([y_1,x_2]),$$ which means that $c(\chi_1,\chi_2,\chi_3) = c(\chi_3,\chi_2,\chi_1)$. Similarly
$$\chi_3([x_1,x_2]) = b_2(y_2,[x_1,x_2]) = b_2([y_2,x_1],x_2) = \chi_2([x_1,y_2]),$$ which means that
$c(\chi_1,\chi_2,\chi_3) = c(\chi_1,\chi_3,\chi_2)$.

\begin{prop}
The commutator of two anti-symmetric twists $F_1 = F_{(A_1,b_1)}$ and $F_2 = F_{(A_2,b_2)}$ belongs to the group
algebra $k[[A_1,A_2]]$ of the commutator $B=[A_1,A_2]$ and has the form $$\sum_{\psi,\chi\in \hat B}
c(\chi,\chi,\psi)c(\chi,\psi,\psi)p_\psi\otimes p_\chi.$$ Here $c:\hat B\times\hat B\times\hat B\to k^\cross$ is the
map defined above.
\end{prop}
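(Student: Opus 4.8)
The plan is to compute the commutator directly as the element $[F_1,F_2]=F_1F_2F_1^{-1}F_2^{-1}$ of the algebra $k[G]^{\otimes 2}$ (invariant twists multiply by ordinary multiplication in $k[G]^{\otimes 2}$, and by Lemma \ref{abstra} applied with $T=Aut^1_\Tw(k[G])$ this commutator is a symmetric twist, i.e. of the form $(u\otimes u)\Delta(u)^{-1}$ as noted before the statement), and then to pin down the resulting element by the same device used in the preceding proposition: since both factors are supported on abelian subgroups, the commutator lies in $k[B]^{\otimes 2}$ with $B=[A_1,A_2]$, and is determined by its pairings against the characters of $B$.

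First I would localise the support. Working modulo $B$ the subgroups $A_1$ and $A_2$ commute in $G$, so the images of $F_1$ and $F_2$ in $k[A_1A_2/B]^{\otimes 2}$ commute and their commutator is trivial. Concretely, using the group-element presentation $F_i=\frac1{|A_i|}\sum_{a,a'\in A_i}b_i(a,a')\,a\otimes a'$ and reorganising the fourfold products appearing in $F_1F_2F_1^{-1}F_2^{-1}$ into the order $A_1A_2$ in each tensor slot, every discrepancy is a product of commutators $[a,b]$ with $a\in A_1$, $b\in A_2$, all of which lie in $B$. Collapsing the resulting sums with the orthogonality relations $\frac1{|A_i|}\sum_{a\in A_i}b_i(a,x)=\delta_{x,1}$ (exactly as in the preceding proposition) kills the directions transverse to $B$ and shows $[F_1,F_2]\in k[B]^{\otimes 2}$.

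For the explicit coefficients I would then evaluate $[F_1,F_2]$ against a pair of characters $\psi,\chi\in\hat B$, mirroring the evaluation $(\chi_{u_1u_2}\otimes\chi_{v_1v_2})(R)$ carried out in the proof that identifies $\circ$. Writing $x_i\in A_i$ for the elements with $b_i(x_i,-)$ equal to $\psi$ or $\chi$ on $B$, the commutators $[a,b]$ surviving the $\delta$-collapses get evaluated by characters of $B$, and by the very definition $c(\chi_1,\chi_2,\chi_3)=\chi_3([x_1,x_2])$ together with the $G$-invariance and multiplicativity of the $b_i$ established before the statement, each surviving term is a value of the trilinear form $c$. Assembling the two groups of terms — those produced by moving $A_1$-elements past $A_2$-elements and those produced by the inverse factors — yields the product $c(\chi,\chi,\psi)c(\chi,\psi,\psi)$ as the coefficient of $p_\psi\otimes p_\chi$, giving the stated formula.

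The hard part is the bookkeeping of the commutators in the metabelian group $A_1A_2$. Because $B$ need not be central in $A_1A_2$, reorganising the fourfold products introduces nested commutators $[[a,b],a']$, and one must verify — using the $G$-invariance of the forms $b_i$ and the defining relations $b_i(x_i,-)=\chi_i$ on $B$, precisely the identities already used to prove the symmetry of $c$ — that these higher terms either cancel between the direct and inverse factors or reduce to single commutators. Confirming that the trilinear structure survives and that the contributions assemble into exactly $c(\chi,\chi,\psi)c(\chi,\psi,\psi)$, rather than some other combination of values of $c$, is the main obstacle; everything else is the same $\delta$-function calculus used repeatedly in this section.
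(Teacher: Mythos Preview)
Your outline is plausible in spirit, but the step you flag as the ``main obstacle'' is exactly the step the paper's proof avoids, and you do not actually resolve it. The paper does not compute the commutator in the group-element presentation $F_i=\frac{1}{|A_i|}\sum b_i(a,a')\,a\otimes a'$. Instead it passes to the Lagrangian presentation (\ref{twlag}): since $B=[A_1,A_2]$ is isotropic in each $(A_i,b_i)$, one chooses Lagrangian subgroups $B\subset B_i\subset A_i$ and writes $F_i=\sum_{\chi_i,\psi_i\in\hat B_i}\overline\beta_i(\chi_i,\psi_i)\,p_{\psi_i}\chi_i\otimes p_{\chi_i^{-1}}\psi_i$. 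The point is that $B_i\subset B^\perp_{b_i}$, and the commutation property $[B^\perp_{b_i},A_j]=1$ already established before the statement shows that each $B_i$ is \emph{central} in $A_1A_2$. Hence the idempotents $p_{\psi_i}$ commute with everything, the product $F_1F_2\,t(F_1)t(F_2)$ collapses by idempotent orthogonality to
\[
\sum p_{\psi_1}p_{\psi_2}\,[\chi_1,\chi_2]\otimes p_{\chi_1^{-1}}p_{\chi_2^{-1}}\,[\psi_1,\psi_2],
\]
and there are \emph{no} nested commutators to manage. Summing over $\psi_i$ with common restriction $\psi$ to $B$ gives $p_\psi$, and by definition $\psi([\chi_1,\chi_2])=c(\chi,\chi,\psi)$, $\chi([\psi_1,\psi_2])=c(\chi,\psi,\psi)$, which yields the formula and the support claim simultaneously.

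Two concrete gaps in your version: first, the argument ``the images in $k[A_1A_2/B]^{\otimes 2}$ commute, hence $[F_1,F_2]\in k[B]^{\otimes 2}$'' is not valid as stated --- an element of $k[A_1A_2]$ mapping to $1$ in $k[A_1A_2/B]$ need not lie in $k[B]$ (the kernel of the quotient map is an ideal, not $k[B]$). You would need the full orthogonality collapse to get this, which you only gesture at. Second, in your direct approach the reorganisation genuinely produces iterated commutators $[[a,b],a']$, and your assertion that they ``either cancel \ldots\ or reduce to single commutators'' is exactly what needs proof; the Lagrangian trick is the mechanism that makes them disappear a priori.
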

\begin{proof}
Note that $B$ is an isotropic subgroup in $(A_1,b_1)$ and $(A_2,b_2)$. Choose Lagrangian subgroups $B\subset B_i\subset
A_i$ and write (as in (\ref{twlag})) $$F_i = \sum_{\chi_i,\psi_i\in \hat
B_i}\overline\beta_i(\chi_i,\psi_i)p_{\psi_i}\chi_i\otimes p_{\chi_i^{-1}}\psi_i.$$ Note that the subgroups $B_i$ are
in the centre of $A_1A_2$. Thus we have the following formula for the commutator $[F_1,F_2] = F_1F_2t(F_1)t(F_2)$:
$$\sum\overline\beta_1(\chi_1,\psi_1)\overline\beta_2(\chi_2,\psi_2)\overline\beta_1({\chi'}_1,{\psi'}_1)
\overline\beta_2({\chi'}_2,{\psi'}_2)$$
$$p_{\psi_1}p_{\psi_2}p_{{\chi'}_1^{-1}}p_{{\chi'}_2^{-1}}\chi_1\chi_2{\psi'}_1{\psi'}_2 \otimes
p_{\chi_1^{-1}}p_{\chi_2^{-1}}p_{{\psi'}_1}p_{\psi_2}\psi_1\psi_2{\chi'}_1{\chi'}_2,$$ which immediately gives
${\chi'}_1=\psi_1^{-1},\ {\chi'}_2=\psi_2^{-1},\ {\psi'}_1=\chi_1^{-1},\ {\psi'}_2=\chi_2^{-1}$. Since the groups $B_i$ do not
have 2-torsion $\overline\beta_i(\chi_i,\psi_i)\overline\beta_i(\psi_i^{-1},\chi_i^{-1}) =1$ and the commutator takes
the form: $$\sum_{\chi_1,\psi_i}p_{\psi_1}p_{\psi_2}\chi_1\chi_2\chi_1^{-1}\chi_2^{-1}\otimes
p_{\chi_1^{-1}}p_{\chi_2^{-1}}\psi_1\psi_2\psi_1^{-1}\psi_2^{-1}.$$ Obviously, $p_{\psi_1}p_{\psi_2} = 0$ if the
restrictions of $\psi_1$ and $\psi_2$ to $B$ do not coincide. Note that $\sum_{\psi_i|_B = \psi}p_{\psi_1}p_{\psi_2} =
p_\psi$ and that $\psi([\chi_1,\chi_2]) = c(\chi,\chi,\psi),$ $\chi([\psi_1,\psi_2]) = c(\chi,\psi\psi)$ so the
commutator is equal to
\begin{equation}
\sum_{\psi,\chi\in \hat B}c(\chi,\chi,\psi)c(\chi,\psi,\psi)p_\psi\otimes p_\chi.
\tag*{\qedhere}
\end{equation}
\end{proof}

Now to find an element $u = \sum_{\chi\in \hat B}u(\chi)p_\chi\in k[B]$ such that $$[F_1,F_2] = (u\otimes
u)\Delta(u)^{-1}$$ it is enough to solve the equation $$u(\chi\psi) =
c(\chi,\chi,\psi)c(\chi,\psi,\psi)u(\chi)u(\psi).$$ Since $c$ is symmetric we can always find a solution. It is much
more subtle to check that this solution will be $G$-invariant, which will guarantee that $u$ is in the centre of $k[G]$. However if $3$ is coprime to $|G|$ then we can always do that by solving $u(\chi)^3 = c(\chi,\chi,\chi)$. Thus we have the
following result.
\begin{theo}
For a finite group $G$ whose order is not divisible by $2$ and $3$ the group of isomorphism classes $Aut_\Tw(k[G])$ of
twisted automorphisms of the group ring $k[G]$ is isomorphic to the semi-direct product of $Out(G)\ltimes
Aut^{anti}_\Tw(k[G])$, where $Aut^{anti}_\Tw(k[G])$ coincides with the abelian group of anti-symmetric twists; that is, the group of pairs
$(A,b)$ consisting of a normal subgroup $A\subset G$ and an alternative bi-multiplicative form $b:A\times A\to
k^\cross$.
\end{theo}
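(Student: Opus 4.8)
The plan is to assemble the two exact sequences produced in the earlier sections into the extension $1\to Aut_\Tw^{anti}(k[G])\to Aut_\Tw(k[G])\to Out(G)\to 1$ and then to split it using genuine group automorphisms. First I would record the ingredients already in hand. By Proposition \ref{separ} (which is where odd order is used) every gauge class in $Aut_\Tw(k[G])$ has a unique separated representative $(f',F_{(S,\beta)})$ with $f'$ a bialgebra automorphism and $\beta$ anti-symmetric; this produces the normal subgroup $Aut_\Tw^1(k[G])$ of invariant twists and the identification $Aut_\Tw(k[G])/Aut_\Tw^1(k[G])\simeq Out(G)/Out_{cl}(G)$. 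By the crossed-module description of Section \ref{clpr}, the transposition-fixed part of $Aut_\Tw^1(k[G])$ is $Out_{cl}(G)$; and by Lemma \ref{abstra}, applied to $T=Aut_\Tw^1(k[G])$ with $t$ the transposition of the two tensor factors, the anti-invariant set $X=Aut_\Tw^{anti}(k[G])=\{x:\ t(x)=x^{-1}\}$ carries the commutative operation $\circ$. Oddness makes $(\ )^2$ bijective on $X$, so we are genuinely in the setting of that lemma.

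The decisive step is to promote $X$ from a subset-with-$\circ$ to an honest subgroup of $Aut_\Tw^1(k[G])$, and this is exactly where coprimality to $3$ enters. In Lemma \ref{abstra} the product of two anti-symmetric twists factors as $xy=s(x,y)(x\circ y)$ with $s(x,y)\in Out_{cl}(G)$, and $s(x,y)=1$ for all $x,y$ precisely when the elements of $X$ commute in $Aut_\Tw^1(k[G])$, since $t(xy)=(xy)^{-1}$ is equivalent to $xy=yx$. The preceding commutator proposition writes $[F_1,F_2]=(u\otimes u)\Delta(u)^{-1}$, and solving $u(\chi)^3=c(\chi,\chi,\chi)$ when $3\nmid|G|$ produces a $G$-invariant, hence central, $u$; a gauge transformation by a central element is trivial, so $[F_1,F_2]=1$ in $Aut_\Tw^1(k[G])$. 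Thus $s\equiv 1$, the operation $\circ$ coincides with the ambient multiplication, and $Aut_\Tw^{anti}(k[G])$ is an abelian subgroup. Together with the transposition-fixed subgroup $Out_{cl}(G)$ this yields the internal direct product $Aut_\Tw^1(k[G])\simeq Out_{cl}(G)\times Aut_\Tw^{anti}(k[G])$ announced in the introduction.

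Next I would check normality and reconstruct $Out(G)$ as the quotient. Transposition is a canonical operation on $Aut_\Tw^1(k[G])$, so conjugation by any element of $Aut_\Tw(k[G])$ commutes with $t$; since $Aut_\Tw^1(k[G])$ is normal and $Aut_\Tw^{anti}(k[G])$ is the subgroup cut out by $t(x)=x^{-1}$, it follows that $Aut_\Tw^{anti}(k[G])$ is normal in all of $Aut_\Tw(k[G])$. Dividing by it, the direct-product decomposition gives $Aut_\Tw^1(k[G])/Aut_\Tw^{anti}(k[G])\simeq Out_{cl}(G)$, while separation gives $Aut_\Tw(k[G])/Aut_\Tw^1(k[G])\simeq Out(G)/Out_{cl}(G)$; splicing these two extensions identifies $Aut_\Tw(k[G])/Aut_\Tw^{anti}(k[G])$ with $Out(G)$. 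Hence there is a short exact sequence $1\to Aut_\Tw^{anti}(k[G])\to Aut_\Tw(k[G])\to Out(G)\to 1$ with abelian normal kernel.

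Finally I would split it. A group automorphism $f$ of $G$ gives the bialgebra automorphism $(f,1)$; these compose on the nose, inner automorphisms become gauge-trivial because group-like elements implement gauge equivalences, and so $[f]\mapsto[(f,1)]$ defines a homomorphism $Out(G)\to Aut_\Tw(k[G])$ that is a section of the projection. Conjugation of $(id,F_{(A,\beta)})$ by $(f,1)$ sends it to $(id,F_{(f(A),f_{*}\beta)})$, so the induced action of $Out(G)$ on $Aut_\Tw^{anti}(k[G])$ is transport of structure, and the sequence realises $Aut_\Tw(k[G])\simeq Out(G)\ltimes Aut_\Tw^{anti}(k[G])$. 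The main obstacle throughout is the second paragraph: the vanishing of the symmetric defect $s$, equivalently the centrality of the gauge element $u$, which is the one point where the hypothesis $3\nmid|G|$ is indispensable; everything else is formal once separation and the group law $\circ$ are in place.
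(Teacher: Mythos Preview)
Your proposal is correct and follows essentially the same route as the paper. The paper itself is extremely terse here: after computing the commutator $[F_1,F_2]$ and observing that for $3\nmid|G|$ one can solve $u(\chi)^3=c(\chi,\chi,\chi)$ to obtain a $G$-invariant (hence central) $u$, it simply writes ``Thus we have the following result'' and states the theorem. Your write-up makes explicit the steps the paper leaves to the reader --- namely that centrality of $u$ forces the gauge class of $[F_1,F_2]$ to be trivial, hence $s\equiv 1$ via Lemma~\ref{abstra}, hence $X=Aut^{anti}_\Tw(k[G])$ is an honest abelian subgroup and $Aut^1_\Tw(k[G])\simeq Out_{cl}(G)\times Aut^{anti}_\Tw(k[G])$; and then the normality, the splicing of the two short exact sequences, and the splitting by $[f]\mapsto[(f,1)]$. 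These are exactly the ingredients encoded in the commutative diagram of the introduction, so there is no divergence of method, only of explicitness.
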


\begin{exam}Heisenberg group.
\end{exam}
For an elementary abelian $p$-group $E$ with a symplectic form $b:E\otimes E\to \F_p$ with values in the prime field
$\F_p$, let $H = H(E,b)$ be the corresponding Heisenberg group:

Fix a primitive $p$-th root of unity $\epsilon$ in $k$. Any anti-symmetric invariant twist in $k[H]^{\otimes 2}$ has a
form: $$F_x = \frac{1}{p^2}\sum_{i,j,s,t=0}^{p-1}\epsilon^{it-js}x^ic^s\otimes x^jc^t =
\sum_{i,j=0}^{p-1}x^ip_{-j}\otimes x^jp_i$$ for some non-central $x\in E$. Here $p_i =
\frac{1}{p}\sum_{t=0}^{p-1}\epsilon^{-it}c^t$ are central idempotents.

Since $F_x^{-1} = F_{x^{-1}}$, the commutator of two invariant twists can be expressed as follows: $$[F_x,F_y] = \sum
x^{i_1}p_{j_1}y^{k_1}p_{l_1}x^{-i_2}p_{j_2}y^{-k_2}p_{l_2}\otimes
x^{j_1}p_{-i_1}y^{l_1}p_{-k_1}x^{j_2}p_{i_2}y^{l_2}p_{k_2} =$$ $$\sum_{i,j=0}^{p-1}[x^i,y^i]p_j\otimes [x^j,y^j]p_i =
\sum_{i,j=0}^{p-1}c^{i^2b(x,y)}p_j\otimes c^{j^2b(x,y)}p_i = \sum_{i,j=0}^{p-1}\epsilon^{(i^2j+j^2i)b(x,y)}p_j\otimes
p_i.$$ Note that $i^2j+j^2i = \frac{1}{3}((i+j)^3 - i^3 - j^3)$. Thus for $\eta\in k$ such that $\eta^3 = \epsilon$ the
element $u=u(x,y) = \sum_{k=0}^{p-1}\eta^{-kb(x,y)}p_k$ satisfies $$(u\otimes u)\Delta(u)^{-1} = [F_x,F_y].$$

\begin{exam}Non-commuting invariant anti-symmetric twists.
\end{exam}
Here, following the Lie algebra case, we construct a group with two abelian normal subgroups equipped with invariant bi-multiplicative forms with given invariant (\ref{c}). Let $C$ be an abelian group and $c:C\oplus C\oplus C\to k^\cross$ be a symmetric tri-multiplicative map. Define on $M(C,c) = C\oplus C\oplus \hat C$ the structure of a meta-abelian group: $$(x_1,y_1,\chi_1)(x_2,y_2,\chi_2) = (x_1+x_2,y_1+y_2,\chi_1\chi_2c(x_1,y_2,-)).$$ The subgroups $A_1 = C\oplus \{0\}\oplus\hat C,\ A_2 = \{0\}\oplus C\oplus\hat C$ are normal abelian. The standard alternative bi-multiplicative forms $b_i$ on $A_2$ are invariant. The commutant of the invariant twists $F_i=F_{(A_i,b_i)}$ has the form $$[F_1,F_2] = (u\otimes u)\Delta(u)^{-1}$$ for $$u = \sum_{x,y\in C}
c(x,x,y)c(x,y,y)p_x\otimes p_y,$$ where $p_x = |C|^{-1}\sum_{\chi\in\hat C}\chi(x)^{-1}(0,0,\chi)\in k[\hat C]\subset k[M(C,c)]$.

Let $Q$ be a group acting on $C$ and preserving $c$.  Then $Q$ acts by automorphisms on $M(C,c)$ and this action preserves subgroups $A_i$ with forms $b_i$. Thus the twists $F_i=F_{(A_i,b_i)}$ remain invariant for the semi-direct product $G=M(C,c)\rtimes Q$. Applying this construction to the tri-multiplicative map $c = e^{\frac{2\pi i\tau}{3}}$ corresponding to the tri-linear form $\tau$ from the example (\ref{ccpa}) with $Q=Aut(\tau)$, we get an example of two invariant anti-symmetric twists whose commutant corresponds to a non-trivial class-preserving automorphism.

\end{document}